\def\bn{\mathbb{N}}
\def\br{\mathbb{R}} 
\def\bc{\mathbb{C}} 
\def\h{\mathcal{H}} 
\def\k{\mathcal{K}}
\newcommand\ca{\mathcal{A}}
\newcommand{\bh}{{\rm B}(\mathcal{H})}
\newcommand{\bk}{{\rm B}(\mathcal{K})}
\newcommand{\bhk}{{\rm B}(\mathcal{H},\mathcal{K})}
\newcommand{\matn}[1]{{\rm M}_n(#1)}
\newcommand{\mat}[2]{{\rm M}_{#1}(#2)}
\newcommand{\rea}[1]{{\rm Re}\,{#1}}
\newcommand{\po}{\perp_{P}}
\newtheorem{theorem}{Theorem}[section]
\newtheorem{lemma}[theorem]{Lemma}
\newtheorem{co}[theorem]{Corollary}
\newtheorem{pr}[theorem]{Proposition}
\theoremstyle{remark}
\newtheorem{re}[theorem]{Remark}
\theoremstyle{definition}
\newtheorem{de}[theorem]{Definition}
\newtheorem{ex}[theorem]{Example}
\numberwithin{equation}{section}
\begin{document}

\title[]{Orthonormal pairs of operators} 

\author{Bojan Magajna} 
\address{Department of Mathematics\\ University of Ljubljana\\
Jadranska 21\\ Ljubljana 1000\\ Slovenia}
\email{Bojan.Magajna@fmf.uni-lj.si}

\thanks{Acknowledgment. I am grateful to the anonymous Referee for his remarks and for bringing two references to my attention.}

\thanks{The author acknowledges the financial support from the
Slovenian Research Agency (research core funding no. P1-0288).}

\keywords{Hilbert space operator, orthogonality,  complete isometry, C$^*$-algebra.}

\subjclass[2020]{Primary 47A30, 46L07; Secondary  47A12, 15A22}

\begin{abstract}We consider pairs of operators $A,B\in\bh$, where $\h$ is a Hilbert space, such  that there exist a linear isometry $f$ from the span of $\{A,B\}$ into $\bc^2$ mapping $A,B$ into orthonormal vectors. We prove some necessary conditions for the existence of such an $f$ and  determine all such pairs among commuting normal operators. Then we characterize all such pairs $A,B$ (in fact,  we consider general sets instead of just pairs) under the additional requirement that $f$ is a complete isometry, when $\h$ carries the column (or row) operator space structure.  We also metrically characterize  elements in a C$^*$-algebra  with orthogonal ranges. 
\end{abstract}

\maketitle

\section{Introduction}

Various notions of orthogonality for vectors in a Banach space were introduced already by Birkhoff \cite{B} and James \cite{J} (recent surveys by Bottazzi, Conde and Sain and by Grover and Sushil  are \cite{BCS} and \cite{GS}), which  were investigated  even in the context of Hilbert C$^*$-modules by Aramba\v si\'c and Raji\' c \cite{AR}.   One possible natural definition of orthogonality, investigated by Eskandari, Moslehian and Popovici in \cite{EMP}  and called {\em Pythagoras orthogonality}, is the following: two vectors $x,y$ in a normed space are orthogonal, which is denoted as $x\po y$, if there exists a linear isometry $f$ from the linear span of $x$ and $y$ into a Hilbert space such that the vectors $f(x)$ and $f(y)$ are orthogonal in the usual sense. This is a much more restricted notion than Pythagorean orthogonality and other types of orthogonalitiy introduced in \cite{J}. 

For the Banach space $\bh$ of all bounded linear operators on a Hilbert space $\h$, perhaps the most studied kind of orthogonality, the Birkhoff-James orthogonality, has been characterized in terms of numerical ranges \cite{St}, \cite[2.2]{M} (although the term ``Birkhoff-James orthogonality'' was not used there) and by Bhatia and \v Semrl \cite{BS}.  Here we will study Pythagoras orthogonality in $\bh$, especially in the case when $\h$ is finite-dimensional, and an appropriate variant of it in the context of operator spaces. 

In Section 2 we will present some necessary conditions for Pythagoras orthogonality. For example, for each such pair $A,B\in\matn{\bc}$ and each $\lambda\in\bc$ the operator $A+\lambda B$ is not invertible. This is in sharp contrast with the situation in Hilbert spaces over $\br$, where the Pauli spin matrices are an example of mutually orthogonal hermitean unitary operators. In Section 3 we characterize Pythagoras orthogonality for commuting normal operators in terms of their joint spectrum.  Then, following the paradigm that the  natural maps between operator spaces are the completely bounded maps (instead of all bounded ones\cite{BL}, \cite{ER}, \cite{Pa}, \cite{Pi}), we  consider the central topic of this article. Namely,  the question, which sets of operators in $\bk$ can be mapped completely isometrically into orthonormal sets in a Hilbert space $\h$, where $\h$ carries the column operator space structure as defined e. g. in \cite{ER}, \cite{Pi}. We call such sets of operators {\em column orthonormal}.  It turns out in Section 4 that, in contrast to the usual Pythagoras orthogonality, column orthogonal sets of operators have a more definitive simple characterization.  A set of norm $1$ operators $C_j\in\bh$ is shown to be column orthogonal if and only if $\|\sum_jC_jC_j^*\|\leq 1$ and there exists a representation $\pi$ of $\bh$ on some Hilbert space $\k$ and a cyclic vector $\xi\in\k$ for $\pi$ such that $\|\pi(C_j)\xi\|=1$ simultaneously for all $j$. In Section 5 we study what happens if in the condition $\|A\alpha+B\beta\|^2=\|A\|^2|\alpha|^2+\|B\|^2|\beta|^2$ for Pythagoras orthogonality we replace scalars $\alpha$ and $\beta$ with elements $X,Y$of a  C$^*$-algebra $\ca$ and replace $\|A\|^2$, $\|B\|^2$ with positive elements $P,Q\in\ca$. That is, for fixed positive $P,Q\in\ca$ we characterize  all pairs of elements $A,B\in\ca$ that satisfy the identity
$$\|X^*A^*AX+Y^*B^*BY\|=\|X^*PX+Y^*QY\|$$
for all $X,Y\in\ca$. We show that in this case $A$ and $B$ have orthogonal ranges (that is, $A^*B=0$) and, in the special case when $Q=P$ is a projection, $A$ and $B$ are necessarily partial isometries with the same initial projection $P$, so that $A$ and $B$ are column orthonormal.
Finally, in Section 6 we return to Pythagoras orthogonality and describe all operators $A$ that are Pythagoras orthogonal to a projection of rank $1$. This indicates that such orthogonality depends on the action of $A$ on the entire Hilbert space.

Note that if $A\po B$ for operators $A,B\in\bh$, then also $f(A)\po f(B)$ for each linear or conjugate-linear isometry $f$ of $\bh$. Particular examples of linear isometries on $\bh$ are: (i) maps of the form $X\mapsto UXV$, where $U,V^*\in\bh$ are isometries, and (ii) $X\mapsto X^t$. (All linear surjective isometries are known to be compositions of this to types \cite[10.5.26, 10.5.32]{KR}, \cite{So}.) The map $X\mapsto X^*$ is a conjugate linear isometry. Thus, $A\po B$ if and only if $A^*\po B^*$.
Recall that for $B\in\bh$ at least one of the operators $B$, $B^*$ has a polar decomposition in which the partial isometric part is an isometry. Suppose that $B=U|B|$, where $U^*$ is an isometry. Then $A\po B$ implies that $U^*A\po|B|$. The reverse implication also holds, since $U$ is isometric on the range of $U^*$, which contains  the ranges of $U^*A$ and $|B|$. Also, multiplying $A$ and $B$ by nonzero scalars does not change orthogonality.
Therefore, for shorter formulation of results  we will often assume that 

{\em $A$ and $B$ are linear operators on a Hilbert space $\h$ with $\|A\|=1=\|B\|$.}

\section{Some necessary conditions for Pythagoras orthogonality}

By definition, for $A, B\in\bhk$ with $\|A\|=1=\|B\|$, the condition $A\po B$ means that $\|A+\lambda B\|^2=1+|\lambda|^2$. This can be written as
$$\|(A+\lambda B)^*(A+\lambda B)\|=1+|\lambda|^2.$$
Since for a positive operator $T$ with the spectrum $\sigma(T)$ the relations $\|T\|\in\sigma(T)$ and $T\leq\|T\|I$ hold, where $I$ is the identity operator,  we can reformulate the orthogonality condition in the following way, stated as a lemma for easier reference:

\begin{lemma}\label{le1} Suppose that $\|A\|=1=\|B\|$, where $A,B\in\bhk$. Then $A\po B$ if and only if for all $\lambda\in\bc$ the operator $$(1+|\lambda|^2)I-(A+\lambda B)^*(A+\lambda B)$$
is positive and not invertible.
\end{lemma}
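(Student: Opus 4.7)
The proof is essentially an unpacking of definitions together with the elementary spectral characterization of the norm of a positive operator, so I expect no real obstacle; the plan is just to make each translation precise.

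First, I will verify that, under the normalization $\|A\|=\|B\|=1$, the condition $A\po B$ is equivalent to the single-parameter identity
\[\|A+\lambda B\|^2 = 1+|\lambda|^2 \qquad (\lambda\in\bc).\]
The forward direction is immediate: an isometry $f$ on $\mathrm{span}\{A,B\}$ sending $A,B$ to orthogonal vectors satisfies $\|f(A)\|=1=\|f(B)\|$, so by the Pythagorean theorem in the target Hilbert space $\|A+\lambda B\|^2=\|f(A)+\lambda f(B)\|^2=1+|\lambda|^2$. For the converse, I would define $f\colon\mathrm{span}\{A,B\}\to\bc^2$ by $f(A)=(1,0)$, $f(B)=(0,1)$, extended linearly (well-defined when $A,B$ are linearly independent; the dependent case is trivially handled by the assumption $\|A\|=\|B\|=1$ forcing $B=\zeta A$ with $|\zeta|=1$, which contradicts the identity at $\lambda=-\zeta^{-1}$ unless one of the operators is zero, which is excluded). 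Then $\|f(A+\lambda B)\|^2=1+|\lambda|^2=\|A+\lambda B\|^2$, so $f$ is an isometry carrying $A,B$ to orthonormal vectors.

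Next I rewrite the identity via the C$^*$-identity $\|T\|^2=\|T^*T\|$: the condition becomes
\[\|(A+\lambda B)^*(A+\lambda B)\|=1+|\lambda|^2 \qquad (\lambda\in\bc).\]
For any positive operator $T$ and any scalar $c\geq 0$ I would then invoke the two standard facts recalled in the excerpt, namely $T\leq\|T\|I$ and $\|T\|\in\sigma(T)$. From these one gets at once: $\|T\|=c$ if and only if $cI-T\geq 0$ and $cI-T$ is not invertible. Indeed, $T\leq cI$ gives $\|T\|\leq c$, while non-invertibility of $cI-T$ means $0\in\sigma(cI-T)$, hence $c\in\sigma(T)$, which combined with $\|T\|\leq c$ and $\sigma(T)\subseteq[0,\|T\|]$ forces $\|T\|=c$; the converse is the same pair of facts read directly.

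Applying this with $T=(A+\lambda B)^*(A+\lambda B)$ and $c=1+|\lambda|^2$ gives precisely the statement of the lemma. The only mildly delicate point is the linearly dependent case in the first paragraph, which I would dispose of with the short parenthetical argument above; everything else is a direct translation.
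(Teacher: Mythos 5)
Your proposal is correct and follows essentially the same route as the paper: pass from the definition of $\po$ to the identity $\|A+\lambda B\|^2=1+|\lambda|^2$, rewrite via the C$^*$-identity, and conclude using $T\leq\|T\|I$ and $\|T\|\in\sigma(T)$ for positive $T$. The only difference is that you spell out the equivalence between the existence of the isometry and the norm identity (including the linearly dependent case), which the paper treats as immediate from the definition.
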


\begin{pr}\label{pr2}Suppose that  $A,B\in\mat{n}{\bc}$ and $\|A\|=1=\|B\|$. If $A\po B$ then for each $\lambda\in\bc$ the operator $A+\lambda B$ is not invertible. 
\end{pr}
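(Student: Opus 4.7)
The plan is to recast the Pythagorean norm identity as a polynomial divisibility statement in the ring $\bc[\lambda,\bar\lambda]$ (where $\lambda$ and $\bar\lambda$ are treated as independent formal variables) and then exploit unique factorization. By Lemma~\ref{le1}, the hypothesis forces $1+|\lambda|^2$ to be an eigenvalue of $M_\lambda := (A+\lambda B)^*(A+\lambda B)$ for every complex $\lambda$, i.e.\ $\det((1+|\lambda|^2)I - M_\lambda)=0$. The entries of $M_\lambda$ lie in $\bc[\lambda,\bar\lambda]$, so the characteristic polynomial $\chi_\lambda(t):=\det(tI-M_\lambda)$ lies in $\bc[\lambda,\bar\lambda][t]$, and $\chi_\lambda(1+\lambda\bar\lambda)\in\bc[\lambda,\bar\lambda]$ is a polynomial which vanishes whenever one substitutes a complex number $z$ for $\lambda$ and its conjugate $\bar z$ for $\bar\lambda$. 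Via the change of variables $\lambda=x+iy$, $\bar\lambda=x-iy$, this is a polynomial in $\bc[x,y]$ vanishing on all of $\br^2$, hence the zero polynomial; that is, $\chi_\lambda(1+\lambda\bar\lambda)\equiv 0$ as an element of $\bc[\lambda,\bar\lambda]$.

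Since $1+\lambda\bar\lambda$ is a root of the monic polynomial $\chi_\lambda(t)$ over the integral domain $\bc[\lambda,\bar\lambda]$, polynomial division yields $\chi_\lambda(t)=(t-(1+\lambda\bar\lambda))\,q_\lambda(t)$ for some $q_\lambda\in\bc[\lambda,\bar\lambda][t]$. Evaluating at $t=0$ and using $\chi_\lambda(0)=(-1)^n\det M_\lambda=(-1)^n p(\lambda)\bar p(\bar\lambda)$, where $p(\lambda):=\det(A+\lambda B)\in\bc[\lambda]$ and $\bar p\in\bc[\bar\lambda]$ is obtained from $p$ by conjugating its coefficients, I deduce that $1+\lambda\bar\lambda$ divides the product $p(\lambda)\bar p(\bar\lambda)$ in $\bc[\lambda,\bar\lambda]$.

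A short check on linear factors shows that $1+\lambda\bar\lambda$ is irreducible in the UFD $\bc[\lambda,\bar\lambda]$. Unique factorization then forces $1+\lambda\bar\lambda$ to divide $p(\lambda)$ or $\bar p(\bar\lambda)$ outright. However $p(\lambda)$ has $\bar\lambda$-degree zero, while any nonzero multiple of $1+\lambda\bar\lambda$ has $\bar\lambda$-degree at least one; this forces $p\equiv 0$, and symmetrically for $\bar p$. Therefore $\det(A+\lambda B)=0$ for every $\lambda$, so $A+\lambda B$ is never invertible. The main conceptual step is recognizing that the Pythagorean identity, which a priori lives only on the real slice $\bar\lambda=\overline{\lambda}$, upgrades to a genuine polynomial identity in two independent formal variables; the rigidity this provides (absent in the real analog, where e.g.\ the real Pauli matrices give an orthogonal invertible pencil) is precisely what precludes invertibility.
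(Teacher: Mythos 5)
Your argument is correct, and its second half is genuinely different from the paper's. Both proofs share the same opening move: the vanishing of $\det\bigl[(1+|\lambda|^2)I-(A+\lambda B)^*(A+\lambda B)\bigr]$ on the real slice $\overline{\lambda}=\lambda^-$ upgrades to a formal polynomial identity (the paper does this in one variable by first restricting to $\lambda\in\br$ and then extending to $\lambda\in\bc$; you do it in the two independent variables $\lambda,\overline{\lambda}$, which is the same polarization principle). From there the routes diverge. The paper specializes its one-variable identity at $\lambda=i$ to obtain $\det\bigl[(B^*-iA^*)(B-iA)\bigr]=0$, replaces $A$ by $\omega A$ for $|\omega|=1$, and uses a pigeonhole argument (one of the two polynomial alternatives $\det(B\pm i\omega A)=0$ holds for infinitely many $\omega$, hence identically). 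You instead keep the full two-variable identity, use the factor theorem over the ring $\bc[\lambda,\overline{\lambda}]$ to extract the factor $t-(1+\lambda\overline{\lambda})$ from the characteristic polynomial, and conclude that the irreducible element $1+\lambda\overline{\lambda}$ divides $p(\lambda)\,\overline{p}(\overline{\lambda})$; primality in the UFD plus the $\overline{\lambda}$-degree count then kills $p$. Each step of yours checks out: the entries of $M_\lambda$ are indeed polynomials of bidegree $(1,1)$; $\det M_\lambda$ does factor as $\overline{p}(\overline{\lambda})\,p(\lambda)$ since $(A+\lambda B)^*=A^*+\overline{\lambda}B^*$ formally; and $1+\lambda\overline{\lambda}$ is irreducible (e.g.\ it is primitive and linear in $\overline{\lambda}$ over $\bc[\lambda]$, so Gauss's lemma applies). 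What your approach buys is the elimination of the case analysis and the phase-rotation trick: the single divisibility statement makes structurally visible why the complex (as opposed to real) setting forbids an invertible pencil. What the paper's approach buys is that it stays entirely within one-variable polynomial identities and elementary determinant manipulations. One cosmetic remark: the dichotomy gives either $p\equiv 0$ or $\overline{p}\equiv 0$, and you should just note that the latter also forces $p\equiv 0$ (conjugate coefficients), rather than phrasing it as two symmetric conclusions; the logic is unaffected.
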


\begin{proof}By Lemma \ref{le1} we have 
$$\det[(|\lambda|^2+1)I-(A+\lambda B)^*(A+\lambda B)]=0$$
for each $\lambda\in\bc$.  For $\lambda\in\br$ this can be rewritten as
$$\det[\lambda^2(I-B^*B))-\lambda(B^*A+A^*B)+I-A^*A]=0.$$
Since the left side of this equality is a polynomial in $\lambda$, it must be identically $0$ for all $\lambda\in\bc$. In particular for $\lambda= i$ we get
$\det[B^*B-A^*A- i(B^*A+A^*B)]=0,$ which we may rewrite as
$$\det[(B^*-iA^*)(B-iA)]=0.$$
We may replace in this argument $A$ by $\omega A$ for any $\omega\in\bc$ with $|\omega|=1$, since $\omega A\po B$ and $\|\omega A\|=1$. Thus $\det(B+i\omega A)=\overline{\det(B^*-i\overline{\omega} A^*)}=0$ or $\det(B-i\omega A]=0$, and at least one of these two possibilities holds for infinitely many values of $\omega$.
Since these determinants are polynomials in $\omega$, it follows that at least one of them is identically $0$ for all $\omega\in\bc$ and this clearly implies that $A+\lambda B$ is not invertible for $\lambda\in\bc$.
\end{proof}

\begin{re}Since Pythagoras orthogonality is symmetric relation, it follows that in  Proposition \ref{pr2} also the operator $B$ is not invertible. 
\end{re}

{\bf Problem.} Can Proposition \ref{pr2} be generalized to operators on infinite dimensional Hilbert spaces?

\smallskip
Suppose that there exists  a unit vector $\xi\in\h$ so that $\|B\xi\|=\|B\|=1$. If $B\geq0$, this means that $B\xi=\xi$ (since $0\leq\|(I-B)\xi\|^2=2-2\|\sqrt{B}\xi\|^2\leq0$, where the inequality follows from $1=\|\sqrt{B}\sqrt{B}\xi\|\leq\|\sqrt{B}\xi\|$). If $A\po B$, then from the inequality
$$\|A\xi\|^2+2\rea(\lambda\langle \xi,A\xi\rangle)+|\lambda|^2=\|(A+\lambda B)\xi\|^2\leq\|A+\lambda B\|^2=1+|\lambda|^2,$$
which can be written as $2\rea(\lambda\langle\xi,A\xi\rangle)\leq1-\|A\xi\|^2,$
we conclude (by considering $|\lambda|\to\infty$) the following lemma:

\begin{lemma}\label{le3}Suppose that $\|A\|=1=\|B\|$, $B\geq0$ and that $A\po B$. If $\xi\in\h$ is such that $\|\xi\|=1=\|B\xi\|$, then $\langle A\xi,\xi\rangle=0.$
\end{lemma}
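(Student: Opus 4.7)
The plan splits naturally into two moves: upgrade the hypothesis $\|B\xi\|=1$ to the pointwise equality $B\xi=\xi$, then substitute $\xi$ into the defining identity $\|A+\lambda B\|^2=1+|\lambda|^2$ that comes from $A\po B$. The upgrade is possible because $B\geq 0$ with $\|B\|=1$ lets me form $\sqrt{B}$ of norm $1$; the chain $1=\|B\xi\|=\|\sqrt{B}(\sqrt{B}\xi)\|\leq \|\sqrt{B}\xi\|\leq 1$ pins $\|\sqrt{B}\xi\|$ to $1$, giving $\langle B\xi,\xi\rangle=\|\sqrt{B}\xi\|^2=1$, and then $\|(I-B)\xi\|^2=2-2\langle B\xi,\xi\rangle=0$ forces $B\xi=\xi$.

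With that in hand, evaluating the norm identity at $\xi$ and using $B\xi=\xi$ yields
$$\|A\xi\|^2+2\rea(\lambda\langle\xi,A\xi\rangle)+|\lambda|^2=\|(A+\lambda B)\xi\|^2\leq 1+|\lambda|^2,$$
which simplifies to $2\rea(\lambda\langle\xi,A\xi\rangle)\leq 1-\|A\xi\|^2$ for every $\lambda\in\bc$. Rotating $\lambda$ so that $\lambda\langle\xi,A\xi\rangle$ is real and nonnegative and letting $|\lambda|\to\infty$, the only way the inequality can persist is if $\langle\xi,A\xi\rangle=0$, which is the desired conclusion.

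Since the argument just assembles ingredients already sketched in the paragraph preceding the lemma, there is no serious obstacle. The one place where care is warranted is the upgrade $\|B\xi\|=1\Rightarrow B\xi=\xi$: without it, the $\lambda$-argument would only control a combination involving $\langle A\xi,B\xi\rangle$ rather than the diagonal entry $\langle\xi,A\xi\rangle$, and the conclusion would not drop out directly.
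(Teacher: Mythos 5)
Your proof is correct and follows essentially the same route as the paper: the paper's own argument (given in the paragraph immediately preceding the lemma) likewise first upgrades $\|B\xi\|=1$ to $B\xi=\xi$ via the square root of $B$, then substitutes $\xi$ into $\|(A+\lambda B)\xi\|^2\leq 1+|\lambda|^2$ and lets $|\lambda|\to\infty$ after rotating the phase. No gaps.
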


Now we can easily classify all pairs of orthogonal operators $A,B$ in $\mat{2}{\bc}$.

\begin{pr}\label{pr4}The only pair $A,B\in\mat{2}{\bc}$ with $\|A\|=1=\|B\|$ that satisfies $A\po B$ is, up to an isometry of $\mat{2}{\bc}$,
\begin{equation}\label{1}A=\left[\begin{array}{ll}
0&0\\
1&0\end{array}\right],\ \ \ B=\left[\begin{array}{ll}
1&0\\
0&0\end{array}\right].\end{equation}
\end{pr}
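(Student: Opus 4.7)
The plan is to combine Proposition \ref{pr2} with Lemma \ref{le3} and a two-stage normalization by isometries of $\mat{2}{\bc}$ to force the pair into the form (\ref{1}).

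First, Proposition \ref{pr2} applied at $\lambda=0$ gives that $A$ is singular; by the symmetry of $\po$ the same holds for $B$. In $\mat{2}{\bc}$ both therefore have rank exactly $1$. Write $B=\xi\eta^*$ with unit vectors $\xi,\eta\in\bc^2$ and choose unitaries $U,V\in\mat{2}{\bc}$ such that $U\xi=e_1$ and $V^*\eta=e_1$. The map $X\mapsto UXV$ is a linear isometry of $\mat{2}{\bc}$ of the form described in the introduction and sends $B$ to $e_1e_1^*=:E_{11}$, the rank-$1$ projection onto the span of $e_1$; here and below $E_{ij}$ denotes the matrix unit with a $1$ in position $(i,j)$ and zeros elsewhere. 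Replacing $(A,B)$ by the image pair, we may assume from now on that $B=E_{11}$.

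With $B$ in this form we have $B\geq 0$ and $\|Be_1\|=1$, so Lemma \ref{le3} yields $\langle Ae_1,e_1\rangle=0$; that is, the $(1,1)$-entry of $A$ vanishes. Setting $a:=A_{12}$, $b:=A_{21}$ and $c:=A_{22}$, Proposition \ref{pr2} now forces
$$\det(A+\lambda B)=\det\begin{pmatrix}\lambda & a\\ b & c\end{pmatrix}=\lambda c-ab$$
to vanish for every $\lambda\in\bc$. Hence $c=0$ and $ab=0$, and since $\|A\|=1$ the unique nonzero off-diagonal entry of $A$ has modulus $1$. Thus $A$ is a unimodular multiple of either $E_{12}$ or $E_{21}$.

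The transpose isometry $X\mapsto X^t$ fixes $B=E_{11}$ and interchanges these two cases, so we may reduce to $A=bE_{21}$ with $|b|=1$. A further isometry $X\mapsto UXV$ with $U=\mathrm{diag}(1,\overline{b})$ and $V=I$ preserves $B$ and sends $A$ to $E_{21}$, yielding exactly the pair in (\ref{1}). A direct computation gives $(A+\lambda B)^*(A+\lambda B)=(1+|\lambda|^2)E_{11}$ for this pair, confirming that it is indeed Pythagoras orthogonal, so the classification is complete.

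The main point of difficulty, if any, lies in the bookkeeping: after fixing $B$ one must restrict to those isometries that stabilize $B=E_{11}$, which in $\mat{2}{\bc}$ amount to transposition together with diagonal unitary conjugations. Once this is organized, the combination of Proposition \ref{pr2} with Lemma \ref{le3} cuts down the structure of $A$ so sharply that only one pair survives.
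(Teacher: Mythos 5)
Your proof is correct and follows essentially the same route as the paper's: normalize $B$ to the rank-one projection $E_{11}$ (the paper does this via $B\geq 0$ from the polar-decomposition reduction rather than your rank-one factorization $B=\xi\eta^*$, but the effect is identical), then use Lemma \ref{le3} to kill the $(1,1)$-entry, Proposition \ref{pr2} to force $c=0$ and $ab=0$, and finally transposition plus a diagonal unitary. The explicit verification that the resulting pair is orthogonal is a welcome addition the paper leaves implicit.
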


\begin{proof}We may assume $B\geq0$. Since $B$ is not invertible by Proposition \ref{pr2}, one of the eigenvalues of $B$ is $0$, hence we may assume that $B$ is as stated in (\ref{1}). Then it follows from Lemma \ref{le3} that $A$ is of the form
$$A=\left[\begin{array}{ll}
0&\beta\\
\gamma&\delta\end{array}\right]\ \ \ (\beta,\gamma,\delta\in\bc).$$
But by Proposition \ref{pr2} $A+\lambda B$ is not invertible, hence $0\equiv\det(A+\lambda B)=\delta\lambda-\beta\gamma$. 
Thus $\delta=0$ and $\beta\gamma=0$. If $\gamma=0$, we apply the transposition, so in any case we can achieve that $A$ is of the form
$$A=\left[\begin{array}{ll}
0&0\\
\gamma&0\end{array}\right],\ \ \mbox{where}\ |\gamma|=1.$$ Multiplying $A$ and $B$ from the left by the unitary matrix $U=\left[\begin{array}{ll}
1&0\\
0&\overline{\gamma}\end{array}\right]$ we arrive to the pair $(A,B)$ as stated in the proposition.
\end{proof}

If $A,B\in\bh$ and $A\po B$, then 
$$H:=\left[\begin{array}{cc}
0&A\\
A^*&0\end{array}\right]\ \ \mbox{and}\ \ K:=\left[\begin{array}{cc}
0&B\\
B^*&0\end{array}\right]$$
are selfadjoint and $H\po K$. However, two nonzero selfadjoint operators on $\bc^n$ can not be Pythagoras orthogonal if one of them is positive. This is a consequence of the following  lemma.

\begin{lemma}\label{00}If $A,B\in{\rm B}(\bc^n)$ are selfadjoint, $B\geq0$ and $\det(A+\lambda B)=0$ for all $\lambda\in\bc$, then $\ker A\cap\ker B\ne0$. Hence $A$ and $B$ are simultaneously unitarily similar to matrices that both have the last column and the last row equal to $0$.
\end{lemma}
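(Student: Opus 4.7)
The plan is to extract a polynomial null-vector $v(\lambda)=v_0+\lambda v_1+\cdots+\lambda^k v_k$ for the pencil $A+\lambda B$ with $v_0\ne 0$, and then to show $v_0\in\ker A\cap\ker B$ by a short inner-product computation that uses both selfadjointness of $A$ and positivity of $B$.

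First I view $A+\lambda B$ as a matrix with entries in the polynomial ring $\bc[\lambda]$. The hypothesis says its determinant vanishes identically, so $A+\lambda B$ is singular over the quotient field $\bc(\lambda)$ and therefore has a non-zero kernel element in $\bc(\lambda)^n$. Clearing denominators yields a polynomial vector in the kernel, and dividing out the greatest common power of $\lambda$ occurring in its coordinates produces $v(\lambda)=\sum_{j=0}^{k}\lambda^j v_j$ with $v_0\ne 0$ and $(A+\lambda B)v(\lambda)=0$ identically in $\lambda$. Alternatively, one can read $v(\lambda)$ off a suitably normalized nonzero column of the adjugate $\mathrm{adj}(A+\lambda B)$.

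Equating the coefficients of $\lambda^0$ and $\lambda^1$ in the identity $(A+\lambda B)v(\lambda)=0$ gives
\[Av_0=0\qquad\text{and}\qquad Av_1+Bv_0=0.\]
Then, using $A=A^*$,
\[\langle Bv_0,v_0\rangle=-\langle Av_1,v_0\rangle=-\langle v_1,Av_0\rangle=0,\]
and positivity of $B$ forces $B^{1/2}v_0=0$, hence $Bv_0=0$. Together with $Av_0=0$ this yields the desired nonzero vector $v_0\in\ker A\cap\ker B$. For the ``hence'' clause I pick a unit vector $\xi\in\ker A\cap\ker B$, extend it to an orthonormal basis whose last member is $\xi$, and let $U$ be the corresponding unitary; both $U^*AU$ and $U^*BU$ then annihilate $e_n$, so each has vanishing last column, and selfadjointness makes the last row vanish as well.

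The step I expect to be the main obstacle is the first one: producing a polynomial null-vector whose constant term is actually nonzero, and cleanly handling the possibility that $\mathrm{adj}(A+\lambda B)$ vanishes identically (that is, the rank of the pencil is uniformly at most $n-2$). Working in $\bc(\lambda)^n$ from the start, where only the fact that $\bc[\lambda]$ is an integral domain is needed, bypasses this issue. Once the polynomial null-vector is in hand, the inner-product computation in the preceding paragraph is the conceptual heart of the proof and the remaining basis-change argument is routine.
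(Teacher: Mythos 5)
Your proof is correct, and it takes a genuinely different and more elementary route than the paper's. You exploit the classical fact that a singular pencil over $\bc[\lambda]$ admits a polynomial null vector: from $\det(A+\lambda B)\equiv 0$ you get a nonzero $v(\lambda)=\sum_j\lambda^jv_j$ with $(A+\lambda B)v(\lambda)\equiv 0$ and, after dividing out the common power of $\lambda$ (legitimate since $\bc[\lambda]^n$ is torsion-free), $v_0\neq 0$. Then only the $\lambda^0$ and $\lambda^1$ coefficients are needed: $Av_0=0$ and $Av_1+Bv_0=0$ give $\langle Bv_0,v_0\rangle=-\langle v_1,Av_0\rangle=0$, and positivity of $B$ finishes it. (Note the edge case $k=0$ is harmless: there $v_1=0$ and $Bv_0=0$ directly.) The paper instead normalizes $B$ to a projection by a congruence, further diagonalizes a corner block, performs two Schur-complement-type eliminations, and factors the determinant to conclude that a certain block $C_2$ has linearly dependent columns; that is a longer computation but makes the block structure of $A$ relative to $\ker B$ explicit, which is in the spirit of the matrix manipulations used elsewhere in the paper. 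Your argument is shorter, isolates exactly where selfadjointness of $A$ and positivity of $B$ enter, and correctly sidesteps the degenerate case where the pencil has rank at most $n-2$ (where an adjugate-based construction of the null vector would fail) by working over the field $\bc(\lambda)$. The concluding basis-change step is routine and correct in both treatments.
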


\begin{proof}Replacing $A$ and $B$ by $S^*AS$ and $S^*BS$, where $S\in\mat{n}{\bc}$ is invertible, does not change the problem, hence we may assume that $B$ is a projection, so that relative to the decomposition $\bc^n=B(\bc^n)\oplus\ker B$ the two operators are
$$A=\left[\begin{array}{cc}
A_1&A_2\\
A_2^*&A_3\end{array}\right]\ \ \mbox{and}\ \ B=\left[\begin{array}{cc}
I&0\\
0&0\end{array}\right].$$
We must prove that there exists a nonzero vector in $$\ker\left[\begin{array}{c}A_2\\
 A_3\end{array}\right],$$
or equivalently, that the rank of this matrix is less than the number of its columns. If we multiply $A$ and $B$ from the left by an invertible block-diagonal matrix of the form $U=I\oplus V$ and from the right by $U^*$, this does not change the problem since 
$$\left[\begin{array}{c}
A_2\\
A_3\end{array}\right]\ \mbox{transforms into}\ \left[\begin{array}{cc}
I&0\\
0&V\end{array}\right]\left[\begin{array}{c}
A_2\\
A_3\end{array}\right]V^*,$$
which does not change the rank. Thus we may assume (using an appropriate choice of $V$) that $A_3$ is diagonal, of the form $A_3=I\oplus0$, so that $A$ and $B$ have now the form
$$A=\left[\begin{array}{ccc}
A_1&C_1&C_2\\
C_1^*&I&0\\
C_2^*&0&0\end{array}\right],\ \ B=\left[\begin{array}{ccc}
I&0&0\\
0&0&0\\
0&0&0\end{array}\right].$$
Multiplying the matrix
$$A+\lambda B=\left[\begin{array}{ccc}
\lambda I+ A_1&C_1&C_2\\
C_1^*&I&0\\
C_2^*&0&0\end{array}\right]$$
from the left by the matrix
$$P=\left[\begin{array}{ccc}
I&-C_1&0\\
0&I&0\\
0&0&I\end{array}\right]$$
and from the right by $P^*$, transforms $A+\lambda B$ into
$$A(\lambda):=\left[\begin{array}{ccc}
\lambda I+A_1-C_1C_1^*&0&C_2\\
0&I&0\\
C_2^*&0&0\end{array}\right].$$
By this operation the sub-matrix consisting of the last two block-columns of $A+\lambda B$ has been simply multiplied from the left by $P$, which can not change the rank. Now we have
\begin{equation}\label{01}0=\det A(\lambda)=\det\left[\begin{array}{cc}
\lambda I+A_1-C_1C_1^*&C_2\\
C_2^*&0\end{array}\right].\end{equation}
For large enough $\lambda$ the matrix $\lambda I+A_1-C_1C_1^*$ is invertible, hence (\ref{01}) implies that
$$0=\det\left[\begin{array}{cc}
I&0\\
-C_2^*(\lambda I+A_1-C_1C_1^*)^{-1}&I\end{array}\right]\det\left[\begin{array}{cc}
\lambda I+A_1-C_1C_1^*&C_2\\
C_2^*&0\end{array}\right]$$$$=\det\left[\begin{array}{cc}
\lambda I+A_1-C_1C_1^*&C_2\\
0&-C_2^*(\lambda I+A_1-C_1C_1^*)^{-1}C_2\end{array}\right]$$$$=-\det(\lambda I+A_1-C_1C_1^*)\det(C_2^*(\lambda I+A_1-C_1C_1^*)^{-1}C_2).$$ 
Thus the square matrix $C_2^*(\lambda I+A_1-C_1C_1^*)^{-1}C_2$ is not invertible, hence not injective. Since $\lambda I+A_1-C_1C_1^*\geq0$ if $\lambda\in\br$ is large enough, we can take the square root and it follows that
$(\lambda I+A_1-C_1C_1^*)^{-1/2}C_2$ is not injective, hence $C_2$ is not injective and so the columns of $C_2$ must be linearly dependent.

We have shown that there exists a unit vector in $\ker A\cap\ker B$. Choosing this vector as the last vector of an orthonormal basis of $\bc^n$, we can represent operators $A$ and $B$ by matrices that have the last row and the last column identically $0$.
\end{proof}

\begin{pr}\label{pr5}If $A,B\in\mat{n}{\bc}$ are nonzero, selfadjoint and $B\geq0$, then $A\not\perp_PB$.
\end{pr}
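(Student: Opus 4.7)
The plan is to induct on $n$, using Proposition \ref{pr2} and Lemma \ref{00} as the key tools, together with a trivial base case $n=1$.

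For the base case $n=1$, a pair of nonzero selfadjoint scalars is just a pair of nonzero reals $a,b$ with $b>0$; the condition $|a+\lambda b|^2=a^2+|\lambda|^2 b^2$ fails at $\lambda=-a/b$, since there we would need $0=2a^2$. So two such scalars cannot be Pythagoras orthogonal.

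For the inductive step, assume the proposition holds in dimension $n-1$, and suppose towards a contradiction that $A,B\in\mat{n}{\bc}$ are nonzero selfadjoint with $B\geq0$ and $A\po B$. Scale so that $\|A\|=\|B\|=1$. Then Proposition \ref{pr2} yields $\det(A+\lambda B)=0$ for every $\lambda\in\bc$, and since $B\geq0$ the hypotheses of Lemma \ref{00} are satisfied. That lemma supplies a unit vector $\xi\in\ker A\cap\ker B$. Choosing an orthonormal basis of $\bc^n$ whose last member is $\xi$, we unitarily conjugate $A$ and $B$ into block form
$$A=\tilde A\oplus 0,\qquad B=\tilde B\oplus 0,$$
with $\tilde A,\tilde B\in\mat{n-1}{\bc}$, $\tilde A$ selfadjoint, and $\tilde B\geq 0$.

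Unitary similarity preserves the operator norm, so $\|\tilde A\|=\|\tilde B\|=1$ (in particular both are nonzero), and for every $\lambda\in\bc$ we have
$$\|\tilde A+\lambda\tilde B\|^2=\|A+\lambda B\|^2=1+|\lambda|^2,$$
hence $\tilde A\po\tilde B$. This contradicts the inductive hypothesis applied to the pair $(\tilde A,\tilde B)$ in $\mat{n-1}{\bc}$, completing the proof.

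No step presents a genuine obstacle: the real work has already been absorbed into Lemma \ref{00}, and the proposition is essentially a streamlined corollary of it combined with the one-dimensional observation. The only point to check carefully is that the inductive reduction preserves all the hypotheses (nonzero, selfadjoint, positivity of $\tilde B$, and the orthogonality relation), which follows automatically from the block-diagonal structure produced by Lemma \ref{00}.
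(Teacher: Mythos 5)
Your proof is correct and follows essentially the same route as the paper: invoke Proposition \ref{pr2} to get $\det(A+\lambda B)\equiv 0$, apply Lemma \ref{00} to split off a common kernel vector and reduce to dimension $n-1$, and descend to the scalar case, where the Pythagoras identity visibly fails. The only cosmetic difference is that you organize the descent as a formal induction while the paper iterates the reduction down to $1\times 1$ matrices; the content is identical.
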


\begin{proof}Assume the contrary, that $A\po B$. We may suppose that $\|A\|=1=\|B\|$. Then $\det(A+\lambda B)=0$ for all $\lambda\in\bc$ by Proposition \ref {pr2}, hence by Lemma \ref{00} we may suppose  that $A$ and $B$ have the last row and the last column equal to $0$. In other words, $A=A_1\oplus0$ and $B=B_1\oplus0$, where $A_1$ and $B_1$ are self-adjoint matrices of size $(n-1)\times(n-1)$, with $B_1\geq0$ and $\|A_1\|=1=\|B_1\|$. It is easy to verify that $A_1\perp_PB_1$, hence this reduces the problem to one dimension smaller. Continuing in this way we arrive after finite number of steps at $1\times 1$ matrices $\alpha,\beta$, that is $\alpha,\beta\in\br$, such that $\beta=1=|\alpha|$ and $\alpha\perp_P\beta$. This means that $|\alpha+\lambda|^2=|\alpha+\lambda \beta|^2=1+|\lambda|^2$ for all $\lambda\in\bc$, which is clearly a contradiction.
\end{proof}

If $P\in\bh$ is a projection and $U\in\bh$ is a partial isometry such that $U^*U=P$ and $UU^*=P^{\perp}=I-P$, then it is easy to verify that $P\po U$. Proposition \ref{pr4} shows that this is essentially the only example of a Pythagoras orthonormal pair on $\h=\bc^2$. The previous two propositions could lead to an impression that  orthogonal pairs of operators are rare, but in fact, already on $\bc^3$ there are many such pairs.  Below we present two classes of examples that were found with the help of Proposition \ref{pr2} and Lemma \ref{le1}. Additional example, namely, the description of all operators that are Pythagoras orthogonal to a projection of rank $1$, is postponed to Section 6, since it requires a lot of computation.

\begin{ex}\label{ex}(i) Here is an example, where the kernels of $A$ and $B$ have only $0$ in their intersection and the same holds for the intersection of kernels of $A^*$ and $B^*$.
$$A=\left[\begin{array}{ccc}
0&0&1\\
0&\alpha&0\\
0&\beta&0\end{array}\right],\ \ B=\left[\begin{array}{ccc}
1&0&0\\
0&b&0\\
0&0&0
\end{array}\right],$$ where $0<b<1$, $\beta\ne0$  and $|\alpha|^2\leq(1-|\beta|^2)(1-b^2)$ (hence also $|\alpha|^2+|\beta|^2\leq1$).
Let us verify that this pair of operators satisfies the condition for orthogonality in Lemma \ref{le1}. Indeed, $\|A\|=1=\|B\|$ and the eigenvalues of the matrix
$$(|\lambda|^2+1)I-(A+\lambda B)^*(A+\lambda B)=$$$$\left[\begin{array}{ccc}
1&0&-\overline{\lambda}\\
0&(1-b^2)|\lambda|^2-\overline{\alpha}b\lambda-\alpha b\overline{\lambda}+1-|\alpha|^2-|\beta|^2&0\\
-\lambda&0&|\lambda|^2\end{array}\right]$$
are $0$, $|\lambda|^2+1$ and
$(1-b^2)|\lambda|^2-\overline{\alpha}b\lambda-\alpha b\overline{\lambda}+1-|\alpha|^2-|\beta|^2$. Thus we have only to show that the last eigenvalue is non-negative for all $\lambda\in\bc$. This is equivalent to the fact that the matrix
$$\left[\begin{array}{cc}
1-b^2&-\overline{\alpha}b\\
-\alpha b&1-|\alpha|^2-|\beta|^2\end{array}\right]$$
is positive, since its diagonal terms and the determinant are non-negative.

(ii) An example of a Pythagorean orthogonal pair in ${\rm M}_{3,2}(\bc)$ is
$$A=\left[\begin{array}{cc}
0&0\\
u\sin\phi\sin\psi&w\sin\phi\cos\psi\\
v\cos\psi&-\overline{u}vw\sin\psi\end{array}\right],\ \ \, B=\left[\begin{array}{cc}
1&0\\
0&\cos\phi\\
0&0\end{array}\right],$$
where $u,v,w\in\bc$ have absolute value $1$ and $\phi,\psi\in\br.$ Namely, it can be shown by a routine (although somewhat lengthy) computation, which we will omit, that this pair satisfies the criterion of Lemma \ref{le1}. Here both $A$ and $B$ can have rank $2$. 
\end{ex}

\section{Pythagoras orthogonal pairs of commuting normal operators}

If $A$ and $B$ are commuting normal operators then there is a polar decomposition $B=U|B|$ of $B$, where $U$ is a unitary in the abelian W$^*$-algebra generated by $A$, $B$ and the identity. Then the operators $U^*A$ and $|B|$ commute and are normal, hence in studying the Pythagoras orthogonality for such operators there is no loss of generality in assuming that one of the operators is positive.

Recall  \cite[p. 22]{Mu} that the (joint) spectrum $\sigma(A,B)$ of two commuting normal operators is defined as $$\sigma(A,B)=\{(\omega(A),\omega(B)):\ \omega\in\Delta\},$$
where $\Delta$ denotes the set of all multiplicative functionals on the C$^*$-algebra $\ca$ generated by $A$ and $B$.

\begin{pr}\label{nor}Let $A,B\in\bh$ be commuting normal operators with $\|A\|=1=\|B\|$ and $B\geq0$. Then $A\po B$ if and only if the  spectrum $\sigma(A,B)$ is contained in the unit half-ball 
$\{(\zeta,t)\in\bc\times\br:\, |\zeta|^2+t^2\leq1,\, t\geq0\}$ and  contains the hemisphere $\{(\zeta,t)\in\bc\times\br:\, |\zeta|^2+t^2=1,\, t\geq0\}$. 
\end{pr}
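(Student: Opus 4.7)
The plan is to work entirely via the Gelfand representation. Since $A$ and $B$ are commuting normals with $B\geq 0$, the unital C$^*$-algebra $\ca$ they generate is abelian, and under the Gelfand isomorphism $\ca\cong C(\sigma(A,B))$ the generator $A$ corresponds to the coordinate function $(\zeta,t)\mapsto\zeta$ while $B$ corresponds to $(\zeta,t)\mapsto t\geq 0$. In particular $\|A+\lambda B\|=\max_{(\zeta,t)\in\sigma(A,B)}|\zeta+\lambda t|$, so the condition $A\po B$ becomes the purely geometric statement that this maximum equals $\sqrt{1+|\lambda|^2}$ for every $\lambda\in\bc$.

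The main tool is the Cauchy--Schwarz inequality on $\bc^2$,
\[|\zeta+\lambda t|^2=|\langle(\zeta,t),(1,\overline{\lambda})\rangle|^2\leq(|\zeta|^2+t^2)(1+|\lambda|^2),\]
valid whenever $t\in\br$. For the forward direction, assume $A\po B$ and pick $(\zeta,t)\in\sigma(A,B)$; the bound $|\zeta+\lambda t|^2\leq 1+|\lambda|^2$ evaluated at $\lambda=t\zeta/(1-t^2)$ (for $t<1$; the case $t=1$ reduces directly to $\zeta=0$, in the spirit of Lemma~\ref{le3}) yields $|\zeta|^2+t^2\leq 1$, giving the half-ball containment. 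For the hemisphere containment, the maximum $\sqrt{1+|\lambda|^2}$ is attained on the compact set $\sigma(A,B)$, and any attainer must saturate both Cauchy--Schwarz and the half-ball bound $|\zeta|^2+t^2\leq 1$; this forces $|\zeta|^2+t^2=1$ together with the proportionality $(\zeta,t)\parallel(1,\overline{\lambda})$, pinning down, for each $\lambda\neq 0$, the unique point
\[(\zeta_\lambda,t_\lambda)=\left(\frac{e^{i\arg\lambda}}{\sqrt{1+|\lambda|^2}},\,\frac{|\lambda|}{\sqrt{1+|\lambda|^2}}\right)\in\sigma(A,B).\]

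As $\lambda$ ranges over $\bc\setminus\{0\}$ these points trace out the hemisphere minus its north pole $(0,1)$ and minus its equator; the missing points arise as limits of $(\zeta_\lambda,t_\lambda)$ when $|\lambda|\to\infty$ and when $\lambda\to 0$ along arbitrary rays, respectively, so by closedness of $\sigma(A,B)$ they too belong to $\sigma(A,B)$. This closure argument is the step I expect to require the most care: the explicit maximizers never actually reach the pole or the equator, so one has to invoke compactness to pull them in. The converse direction is then immediate: the half-ball containment combined with Cauchy--Schwarz yields $\|A+\lambda B\|\leq\sqrt{1+|\lambda|^2}$, while the hemisphere containment supplies the point $(\zeta_\lambda,t_\lambda)$ realizing equality; the normalizations $\|A\|=\|B\|=1$ fall out automatically from the presence of the equator (giving $\|A\|=1$) and the north pole (giving $\|B\|=1$) in $\sigma(A,B)$.
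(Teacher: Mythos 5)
Your proof is correct and follows essentially the same route as the paper: both pass to characters of the abelian C$^*$-algebra (your Gelfand-transform picture), derive the half-ball containment by optimizing over $\lambda$ (your Cauchy--Schwarz saturation at $\lambda=t\zeta/(1-t^2)$ is exactly the paper's completion of the square), identify the unique maximizer $(\zeta_\lambda,t_\lambda)$ for each $\lambda\neq0$, and recover the equator and north pole by closedness of $\sigma(A,B)$. No substantive differences.
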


\begin{proof}Since the norm of a normal operator is equal to its spectral radius  the condition $A\po B$ is equivalent to $$\max_{\omega\in\Delta}|\omega(A+\lambda B)|^2=1+|\lambda|^2\ \ \forall\lambda\in\bc.$$
This condition means (using also $B\geq0$) that 
\begin{equation}\label{90}(1-\omega(B)^2)|\lambda|^2-2\rea(\lambda\omega(A^*B))+1-|\omega(A)|^2\geq0\ \ \forall\lambda\in\bc,\ \forall\omega\in\Delta\end{equation}
and that for each $\lambda\in\bc$ there exist $\omega_{\lambda}\in\Delta$ such that equality holds in (\ref{90}) when $\omega=\omega_{\lambda}$.  If $\omega(B)\ne1$, we can write (\ref{90}) as
\begin{equation}\label{91}|\sqrt{1-\omega(B)^2}\lambda-\frac{\omega(B^*A)}{\sqrt{1-\omega(B)^2}}|^2+1-|\omega(A)|^2-\frac{|\omega(B^*A)|^2}{1-\omega(B)^2}\geq0\ \ \forall{\lambda\in\bc},\end{equation} which means that $1-|\omega(A)|^2-\frac{|\omega(B^*A)|^2}{1-\omega(B)^2}\geq0$, that is
 \begin{equation}\label{92}|\omega(A)|^2+\omega(B)^2\leq1.\end{equation}
This holds even if $|\omega(B)|=1$, for in this case (\ref{90}) implies (by considering $|\lambda|\to\infty$) that $\omega(A)=0$. Further, since for $\omega=\omega_{\lambda}$ equality holds in (\ref{90}), $\omega_{\lambda}(B)\ne1$, for otherwise (\ref{92}) would imply that $\omega_{\lambda}(A)=0$ and then the equality could not hold in (\ref{90}). Hence equality must hold also in (\ref{91}). Thus for $\omega=\omega_{\lambda}$ equality holds in (\ref{92}) and in (\ref{91}). Using $|\omega_{\lambda}(A)|^2+\omega_{\lambda}(B)^2=1$ we can  simplify the equality case of (\ref{91}) when $\omega=\omega_{\lambda}$ to $\omega_{\lambda}(B)=\overline{\lambda}\omega_{\lambda}(A)$, hence we have now
\begin{equation}\label{93}\forall\lambda\in\bc\ \exists\omega_{\lambda}\in\Delta\ \mbox{such that}\ |\omega_{\lambda}(A)|^2+|\omega_{\lambda}(B)|^2=1\ \mbox{and}\ \omega_{\lambda}(B)=\overline{\lambda}\omega_{\lambda}(A).\end{equation}
The inequality (\ref{92}), together with $B\geq0$, means that $\sigma(A,B)$ is contained in the half-ball as stated in the theorem, while (\ref{93}) means that $\sigma(A,B)$ contains a point $(\zeta,t)$ in the intersection of the hemisphere with the ray $t=\overline{\lambda}\zeta$. Each point in the hemisphere $\{(\zeta,t)\in\bc\times\br:\, |\zeta|^2+t^2=1,\, t\geq0\}$ is on such a ray, except the north pole $(0,1)$, but $\sigma(A,B)$ is closed, hence it must contain the whole hemisphere.
\end{proof}

\begin{co}Two non-zero commuting normal operators $A,B$ on a finite dimensional Hilbert space can not be Pythagoras orthogonal.
\end{co}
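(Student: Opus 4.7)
The plan is to deduce this immediately from Proposition \ref{nor} by exploiting the fact that the joint spectrum of commuting normal operators on a finite dimensional Hilbert space is finite, whereas the spectrum is forced to contain an uncountable hemisphere.

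First I would perform the reduction already indicated at the start of Section 3: using the polar decomposition $B=U|B|$ in the abelian W$^*$-algebra generated by $A$, $B$ and $I$, together with the fact that $U$ is unitary (so left multiplication by $U^*$ is an isometry of $\bh$ preserving both commutativity and $\po$), we replace $(A,B)$ by $(U^*A,|B|)$. Then $U^*A$ and $|B|$ are still nonzero, commuting, and normal, and $|B|\geq 0$. After rescaling by nonzero scalars (which preserves $\po$), I may assume $\|U^*A\|=1=\||B|\|$, so Proposition \ref{nor} applies.

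The conclusion of Proposition \ref{nor} then gives that $\sigma(U^*A,|B|)$ contains the full hemisphere $\{(\zeta,t)\in\bc\times\br:\,|\zeta|^2+t^2=1,\,t\geq 0\}$. On the other hand, since $\h$ is finite dimensional and $U^*A$, $|B|$ are commuting normal operators, they are simultaneously diagonalizable; consequently the joint spectrum $\sigma(U^*A,|B|)$ consists of the finitely many pairs $(\omega(U^*A),\omega(|B|))$ as $\omega$ ranges over the (finitely many) characters of the unital $*$-algebra generated by $U^*A$ and $|B|$. A finite set cannot contain an uncountable hemisphere, which is the desired contradiction.

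There is no real obstacle here — the whole content is in Proposition \ref{nor}. The only point that requires slight care is verifying that the preliminary reductions (taking $B\geq 0$ via the polar decomposition and rescaling to unit norm) remain valid in the commuting normal setting and preserve the hypotheses $A\neq 0\neq B$ and finite dimensionality; all of this is immediate.
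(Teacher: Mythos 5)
Your proof is correct and follows essentially the same route as the paper: apply Proposition \ref{nor} and observe that the joint spectrum of commuting normal operators on a finite dimensional space is finite, hence cannot contain the hemisphere. The paper's version is terser (it leaves the reduction to $B\geq 0$ implicit, relying on the remarks at the start of Section~3), whereas you spell that reduction out explicitly; this is a presentational difference only.
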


\begin{proof}The fact that C$^*(A,B)$ is finite dimensional (and abelian) implies that C$^*(A,B)$ has only finitely many multiplicative functionals, consequently $\sigma(A,B)$ is a finite set. Hence $\sigma(A,B)$ can not contain the hemisphere $\{(\zeta,t)\in\bc\times\br:|\zeta|^2+t^2=1,\ t\geq0\}$, so by Proposition \ref{nor} $A$ and $B$ can not be Pythagoras orthogonal.
\end{proof}

The joint (algebraic) numerical range of an $n$-tuple $(A_1,\ldots,A_n)$ of elements of a C$^*$-algebra $\mathcal{A}$  is defined as
$$V(A_1,\ldots,A_n)=\{(\omega(A_1),\ldots,\omega(A_n)):\, \omega\in S(\ca)\},$$
where $S(\ca)$ is the set of all states on $\mathcal{A}$ (= positive functionals of norm $1$). Pythagoras orthogonality of general operators can be characterized as follows:

\begin{pr}\label{th1} Let $A,B\in\bh$ and $\|A\|=1=\|B\|$. Then $A\po B$ if and only if the set $V:=V(I-A^*A,I-B^*B,B^*A)$ is contained in the ``cone''
$$\mathcal{C}=\{(x,y,z)\in\br^2\times\bc:\, 0\leq x\leq1,\, 0\leq y\leq1,\, |z|\leq\sqrt{xy}\}$$
and for all $w\in\bc$ with $|w|=1$
and $s,t\in\br_+=(0,\infty)$ the set
$V$ intersects the closed ray from $0$ in the direction of vector $(s,t,w\sqrt{st})$.
\end{pr}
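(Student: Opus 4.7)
The plan is to reduce the criterion of Lemma~\ref{le1} to a question about $V$ via two standard facts: (a) a selfadjoint $T\in\bh$ satisfies $T\geq 0$ iff $\omega(T)\geq 0$ for every state $\omega$ on $\bh$, and (b) a positive $T\in\bh$ is non-invertible iff there is a state $\omega$ with $\omega(T)=0$ (one direction from $\omega(T)\geq\min\sigma(T)$; the other by weak*-compactness of the state space applied to vector states coming from unit vectors $\xi_n$ with $\|T\xi_n\|\to 0$).

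First I would expand
$$T_\lambda:=(1+|\lambda|^2)I-(A+\lambda B)^*(A+\lambda B)=(I-A^*A)+|\lambda|^2(I-B^*B)-2\rea(\bar\lambda B^*A),$$
so that applying a state $\omega$ with $(x,y,z):=(\omega(I-A^*A),\omega(I-B^*B),\omega(B^*A))\in V$ gives $\omega(T_\lambda)=x+|\lambda|^2 y-2\rea(\bar\lambda z)$; the bounds $0\leq x,y\leq 1$ are automatic from $0\leq A^*A,B^*B\leq I$. A completion-of-square argument,
$$x+|\lambda|^2 y-2\rea(\bar\lambda z)\geq(\sqrt{x}-|\lambda|\sqrt{y})^2+2|\lambda|(\sqrt{xy}-|z|),$$
then shows that this quantity is $\geq 0$ for every $\lambda\in\bc$ iff $|z|^2\leq xy$, and for a given $\lambda\neq 0$ it equals $0$ iff $x=|\lambda|^2 y$, $|z|=\sqrt{xy}$, and $z$ is a nonnegative multiple of $\lambda$. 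Combined with fact~(a), this immediately yields that $T_\lambda\geq 0$ for every $\lambda$ is equivalent to $V\subseteq\mathcal{C}$.

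For the non-invertibility part, fact~(b) applied for each fixed $\lambda$ says $T_\lambda$ is non-invertible iff some state produces $(x,y,z)\in V$ with $\omega(T_\lambda)=0$; by the equality analysis above (for $\lambda\neq 0$) this means $V$ contains a point of the form $(|\lambda|^2 r,r,\lambda r)$ for some $r\geq 0$. Such a point lies on the ray from $0$ in direction $(s,t,w\sqrt{st})$ with $(s,t,w)=(|\lambda|^2,1,\lambda/|\lambda|)$, and conversely every ray in condition~(ii) arises this way (up to positive rescaling of $(s,t)$) from the unique $\lambda=w\sqrt{s/t}\neq 0$. The case $\lambda=0$ requires no separate ray condition, since $T_0=I-A^*A$ is automatically non-invertible under the assumption $\|A\|=1$. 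Assembling both directions yields the stated equivalence. The step I expect to require the most care is the equality analysis in the cone, together with verifying that the $\lambda\leftrightarrow(s,t,w)$ correspondence exhausts every ray direction appearing in~(ii).
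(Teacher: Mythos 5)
Your proposal is correct and follows exactly the strategy the paper itself indicates (reformulating Lemma \ref{le1} via states and running the completion-of-square and equality analysis analogous to Proposition \ref{nor}); you have simply carried out the elementary computation that the paper explicitly omits, including the correct $\lambda\leftrightarrow(s,t,w)$ dictionary for the rays and the separate treatment of $\lambda=0$. The only point to state a bit more carefully is that your equality characterization $\omega(T_\lambda)=0\iff(x,y,z)=(|\lambda|^2r,r,\lambda r)$ is valid only under the standing cone condition $|z|\le\sqrt{xy}$, which is indeed available in both directions of the argument where you invoke it.
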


\begin{proof}[The idea of the proof] Since $\|T\|^2=\|T^*T\|=\max\{\omega(T^*T):\, \omega\in S(\bh)\}$ for each $T\in\bh$, the condition 
$$\|A+\lambda B\|^2=\|(A+\lambda B)^*(A+\lambda B)\|=1+|\lambda|^2$$
is equivalent to the simultaneously validity of the following two conditions:
$$\omega((A+\lambda B)^*(A+\lambda B))\leq1+|\lambda|^2\ \ \forall\lambda\in\bc,\ \forall\omega\in S(\bh)\ \ \ \mbox{and}$$
$$\forall\lambda\in\bc\ \exists\omega_{\lambda}\in S(\bh)\ \mbox{such that}\ \omega_{\lambda}((A+\lambda B)^*(A+\lambda B))=1+|\lambda|^2.$$

Using these two conditions the proof can be accomplished by an elementary computation, analogous to  the one in the proof of Proposition \ref{nor}.
Since the proposition will not be used later in the paper we will omit this details. 
\end{proof}

\section{Column orthogonal operators}

The tensor product $\matn{\bc}\otimes\bh$ used below is the usual tensor product with the (unique C$^*$-tensor) norm that comes from the natural isomorphisms $\matn{\bc}\otimes\bh\cong\matn{\bh}\cong{\rm B}(\h^n)$. 
\begin{de}\label{dec}A finite set of  operators $B_j\in\bh$ is called {\em column orthogonal} if at least one of the operators is $0$ or the operators $C_j:=\|B_j\|^{-1}B_j$ satisfy 
\begin{equation}\label{c1}\|\sum\alpha_j\otimes C_j\|^2=\|\sum\alpha_j^*\alpha_j\|\end{equation}
for all $\alpha_j\in\matn{\bc}$ and all $n\in\bn$.
A general set of operators in $\bh$ is column orthogonal if all of its finite subsets are column orthogonal. 
\end{de}

For a finite set $(C_j)$ the condition (\ref{c1}) means that there is a completely isometric isomorphism $\varphi$ from the linear span of $(C_j)$ into a column Hilbert space $\h$ with an orthonormal set $(\epsilon_j)$ such that $\varphi(C_j)=\epsilon_j$ for all $j$.  (For a formal definition of a column Hilbert space see e. g. \cite{ER}, \cite{Pi}, \cite{Math}.) Thus, in particular, the norm of the row $\left[\begin{array}{lll}C_0&C_1&\ldots\end{array}\right]=\sum_j E_{1,j}\otimes C_j$ (where $E_{i,j}:=\epsilon_i\otimes\epsilon_j^*$ are matrix units) is equal to $\|\left[\begin{array}{lll}\epsilon_0&\epsilon_1&\ldots\end{array}\right]\|=1$, hence 
$$\|\sum C_jC_j^*\|=1.$$
Similarly we could define {\em row orthonormal} set by declaring that for any finite subset $C_0,\ldots,C_m$ there is a complete isometry from ${\rm span}\{C_0,\ldots,C_m\}$ into some row Hilbert space $\h^*$, mapping the $C_j$'s onto orthonormal vectors in $\h^*$. In this case the identity (\ref{c1}) is replaced by $\|\sum\alpha_j\otimes C_j\|^2=\|\sum\alpha_j\alpha_j^*\|$ ($\alpha_j\in\matn{\bc}$) and such operators necessary satisfy $\|\sum C_j^*C_j\|=1$.
The theorem below characterizes column orthonormal sets and the characterization of row orthonormal sets can be obtained then by taking adjoints, but first we need a simple lemma.

\begin{lemma}\label{lec}Let $C_j\in\bk$ satisfy
 $\sum_{j=0}^mC_jC_j^*\leq I$
and let $\omega$ be a state on $\bk$ such that $\omega(C_j^*C_j)=1$ for all $j$. Then $\omega(C_k^*C_j)=0$ if $k\ne j$.
\end{lemma}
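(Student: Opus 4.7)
The plan is to package the scalars $\omega(C_k^*C_j)$ into the $(m+1)\times(m+1)$ complex matrix $M := (\omega(C_k^*C_j))_{k,j=0}^m$ and prove that $M$ equals the identity. (Equivalently, in the GNS representation $(\pi,\mathcal{H}_\omega,\xi)$ of $\omega$, $M$ is the Gram matrix of the vectors $\pi(C_j)\xi$, and one wants to show these are orthonormal.)

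The first step is positivity: for any $\lambda=(\lambda_0,\dots,\lambda_m)\in\bc^{m+1}$, set $S:=\sum_j\lambda_jC_j$. Then $\lambda^*M\lambda=\omega(S^*S)\ge 0$, so $M\ge 0$; and by hypothesis each diagonal entry $M_{jj}=\omega(C_j^*C_j)$ equals $1$.

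The second step is to show $M\le I_{m+1}$. Since $\omega$ is a state, $\omega(S^*S)\le\|S^*S\|=\|S\|^2$. The hypothesis $\sum_j C_jC_j^*\le I$ says precisely that the row operator $R:=[C_0,\dots,C_m]\colon\k^{m+1}\to\k$ is a contraction; applied to a vector of the form $\lambda\otimes v$ ($v\in\k$) it produces $Sv$, so $\|S\|\le\|R\|\,\|\lambda\|\le\|\lambda\|$. Combining these, $\lambda^*M\lambda\le\|\lambda\|^2$, i.e.\ $M\le I$.

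The final step is a standard observation: $I-M$ is a positive semidefinite matrix all of whose diagonal entries vanish; writing $I-M=B^*B$ gives $\|Be_j\|^2=(B^*B)_{jj}=0$ for every $j$, so $B=0$ and $M=I$. The off-diagonal identities $\omega(C_k^*C_j)=M_{kj}=0$ for $k\ne j$ then follow. There is no real obstacle; the only insight needed is to assemble the numbers $\omega(C_k^*C_j)$ into one matrix and compare it with $I$ via the two inequalities that the hypotheses provide.
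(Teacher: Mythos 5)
Your proof is correct, and it takes a genuinely different (though dual) route from the paper's. The paper passes to the GNS representation $(\pi,\h,\xi)$ of $\omega$, sets $\eta_j=\pi(C_j)\xi$, and observes that the rank-one projections $P_j=\eta_j\eta_j^*$ satisfy $\sum_jP_j\leq\sum_j\pi(C_j)\pi(C_j)^*\leq I$, which forces the projections (hence the vectors $\eta_j$) to be mutually orthogonal via the little computation $\sum_{j\ne k}P_kP_jP_k\leq 0$. You instead never leave the state: you assemble the Gram matrix $M=(\omega(C_k^*C_j))$, get $M\geq0$ from $\omega(S^*S)\geq0$, get $M\leq I$ from $\omega(S^*S)\leq\|S\|^2$ combined with the fact that $\sum_jC_jC_j^*\leq I$ makes the row $[C_0,\dots,C_m]$ a contraction, and then conclude $M=I$ because a positive semidefinite matrix with vanishing diagonal is zero. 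Every step checks out: $\lambda^*M\lambda=\omega(S^*S)$ with $S=\sum\lambda_jC_j$, the estimate $\|Sv\|=\|R(\lambda\otimes v)\|\leq\|\lambda\|\,\|v\|$ is right, and $(B^*B)_{jj}=\|Be_j\|^2$ kills $I-M$. The two arguments are in fact dual to one another: the paper bounds the ``frame operator'' $\sum_j\eta_j\eta_j^*$ acting on the GNS space, while you bound the Gram matrix $(\langle\eta_j,\eta_k\rangle)$ acting on $\bc^{m+1}$, and these have the same nonzero spectrum. What your version buys is that it is entirely elementary and avoids invoking GNS (your parenthetical mention of it is optional); what the paper's version buys is that it exhibits the orthonormal system $\pi(C_j)\xi$ explicitly in a representation space, which is exactly the geometric picture exploited afterwards in the proof of Theorem \ref{thc}.
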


\begin{proof}Let $\pi$ be the cyclic representation of $\bk$ on a Hilbert space $\h$ that corresponds to $\omega$ by the GNS construction, $\xi\in\h$ the corresponding cyclic unit vector and $A_j=\pi(C_j)$. Then the hypothesis says that $\sum_{j=0}^mA_jA_j^*=\pi(\sum_{j=0}^mC_jC_j^*)\leq I$ and $\|A_j\xi\|^2=\langle\pi(C_j)^*\pi(C_j)\xi,\xi\rangle=\omega(C_j^*C_j)=1$ for all $j$. Since $\omega(C_k^*C_j)=\langle A_j\xi,A_k\xi\rangle$, we have to prove that $\langle A_j\xi,A_k\xi\rangle=0$ if $j\ne k$ . Let $E:=\xi\otimes\xi^*$ be the projection onto $\bc\xi$ and  $P_j=A_jEA_j^*=A_j\xi\otimes(A_j\xi)^*$. Since $\|A_j\xi\|=1$, each $P_j$ is a projection onto $\bc A_j\xi$. We have $\sum_{j=0}^mP_j=\sum_{j=0}^mA_jEA_j^*\leq\sum_{j=0}^mA_jA_J^*\leq I$, which implies that the ranges of $P_j$ are mutually orthogonal. (Indeed, $\sum_{j=0}^mP_kP_jP_k\leq P_k$ implies that $\sum_{j\ne k}P_kP_jP_k\leq0$. Since $P_kP_jP_k\geq0$, this means that $P_kP_j(P_kP_j)^*=P_kP_jP_k=0$, hence $P_kP_j=0$.) Thus $A_j\xi\perp A_k\xi$ if $k\ne j$. 
\end{proof}

\begin{theorem}\label{thc}A set of norm $1$ operators $C_j\in {\rm B}(\k)$  acting on a  Hilbert space $\k$ is column orthogonal if and only if $\sum C_jC_j^*\leq I$ and there exists a state $\omega$ on ${\rm B}(\k)$ (or on the C$^*$-algebra generated by all $C_j$) such that $\omega(C_j^*C_j)=1$ for all $j$. (Note that the last condition just means that in the cyclic representation $\pi$ arising from $\omega$ the operators $\pi(C_j)$ all attain their norms $1$ at the same vector $\eta$, namely at a cyclic vector for $\pi$, so that $\omega(T)=\langle\pi(T)\eta,\eta\rangle$ for $T\in\bk$.)
\end{theorem}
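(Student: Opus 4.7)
The plan is to prove necessity by specializing the defining identity (\ref{c1}) to two particular choices of scalar matrix units, and then to prove sufficiency by using the GNS construction of the state $\omega$ to produce an explicit lower bound for $\|\sum_j\alpha_j\otimes C_j\|$, together with a factorization of this operator as the product of a row contraction and a scalar matrix to obtain the matching upper bound.

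For necessity, first consider a finite column orthogonal family $C_0,\ldots,C_m$. Substituting $\alpha_j=E_{1,j+1}\in{\rm M}_{m+1}(\bc)$ into (\ref{c1}) gives $\|\sum_{j=0}^mC_jC_j^*\|=1$ (since $\sum_jE_{j+1,1}E_{1,j+1}=I_{m+1}$), hence $\sum_jC_jC_j^*\leq I$. Substituting instead $\alpha_j=E_{j+1,1}$ yields $\|\sum_jC_j^*C_j\|=m+1$. Since this positive operator attains its norm at some state $\omega$ on $\bk$ and $\omega(C_j^*C_j)\leq\|C_j\|^2=1$ for each $j$, we must have $\omega(C_j^*C_j)=1$ for every $j$. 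For a general (possibly infinite) column orthogonal set, the partial sums $\sum_{j\in F}C_jC_j^*$ form an increasing net bounded by $I$, and hence $\sum_jC_jC_j^*\leq I$; the sets $\Omega_F=\{\omega\in S(\bk):\, \omega(C_j^*C_j)=1 \mbox{ for all }j\in F\}$ are nonempty by the finite case just treated, weak-$*$ closed, and have the finite intersection property, so weak-$*$ compactness of $S(\bk)$ produces a common state $\omega$ with $\omega(C_j^*C_j)=1$ for every $j$.

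For sufficiency, assume both conditions hold and let $(\pi,\h,\xi)$ be the GNS triple associated to $\omega$. Each $\pi(C_j)\xi$ is a unit vector, and by Lemma \ref{lec} these vectors are mutually orthogonal. To verify (\ref{c1}) for a finite subset $C_0,\ldots,C_m$ and any $\alpha_j\in\matn{\bc}$, set $T=\sum_j\alpha_j\otimes C_j$. Applying $I_n\otimes\pi$ and evaluating at $v\otimes\xi$ for a unit vector $v\in\bc^n$ gives, by orthonormality of $\{\pi(C_j)\xi\}$,
$$\|(I_n\otimes\pi)(T)(v\otimes\xi)\|^2=\sum_j\|\alpha_jv\|^2=\langle(\textstyle\sum_j\alpha_j^*\alpha_j)v,v\rangle;$$
taking the supremum over unit $v$ yields $\|T\|^2\geq\|\sum_j\alpha_j^*\alpha_j\|$.

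For the matching upper bound, factor $T=(I_n\otimes R)(A\otimes I_{\k})$, where $R=[C_0\ C_1\ \ldots\ C_m]\in{\rm B}(\k^{m+1},\k)$ is the row operator with $\|R\|^2=\|\sum_jC_jC_j^*\|\leq 1$, and $A$ is the scalar $n(m+1)\times n$ matrix whose $(p,q)$-block is the column vector $((\alpha_j)_{p,q})_{j=0}^m\in\bc^{m+1}$. A direct block computation shows $A^*A=\sum_j\alpha_j^*\alpha_j$ and $(I_n\otimes R)(A\otimes I_{\k})=T$, hence $\|T\|^2\leq\|R\|^2\|A\|^2\leq\|\sum_j\alpha_j^*\alpha_j\|$. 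Combined with the lower bound this yields equality in (\ref{c1}). The one step that needs real care is the bookkeeping for this factorization; once it is in place, the rest reduces to Lemma \ref{lec} on the state side and the row-contraction estimate $\|R\|\leq 1$ on the operator side.
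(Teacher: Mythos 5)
Your proof is correct, and the two directions deserve separate comments. The sufficiency direction is essentially the paper's own argument: the lower bound via the GNS representation and the orthonormality of the vectors $\pi(C_j)\xi$ (Lemma \ref{lec}) is the vector-state form of the paper's application of ${\rm id}\otimes\omega$, and your factorization $T=(I_n\otimes R)(A\otimes I_{\k})$ with $A^*A=\sum_j\alpha_j^*\alpha_j$ is just a re-bracketing of the paper's row-times-column factorization $\sum_j(C_j\otimes I)(I\otimes\alpha_j)$; both give $\|T\|^2\leq\|\sum_jC_jC_j^*\|\,\|\sum_j\alpha_j^*\alpha_j\|$. The necessity direction, however, is genuinely different and more elementary than the paper's. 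The paper invokes the representation theorem for complete contractions, writing the complete isometry onto the model column of matrix units as $X\pi(\cdot)Y$ and then extracting the cyclic vector by a norm-one/Cauchy--Schwarz rigidity argument. You instead test (\ref{c1}) only on the two families $\alpha_j=E_{1,j+1}$ and $\alpha_j=E_{j+1,1}$, obtaining $\|\sum_jC_jC_j^*\|=1$ and $\|\sum_jC_j^*C_j\|=m+1$, and then produce the state as a norming state for the positive operator $\sum_jC_j^*C_j$: since each $\omega(C_j^*C_j)\leq1$ and the $m+1$ terms sum to $m+1$, each must equal $1$. This avoids Paulsen's factorization theorem entirely and, incidentally, makes explicit that only the row and column norms of the set are used --- which is exactly the observation the paper records after Corollary \ref{coc} in connection with Mathes's characterization of column Hilbert space. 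Your compactness argument for infinite sets (finite intersection property of the weak-$*$ closed sets $\Omega_F$) is the same as the paper's weak-$*$ limit point of the net $\omega_F$.
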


\begin{proof}Suppose that $\sum_jC_jC_j^*\leq I$ and that there exists a state $\omega$ satisfying $\omega(C_j^*C_j)=1$ for all $j$. For any matrices $\alpha_j,\beta_j\in\matn{\bc}$ and finite subset $F$ of indexes we then have 
$$\|\sum_{j\in F}\alpha_j\otimes C_j\|^2=\|\sum_{j\in F} C_j\otimes\alpha_j\|^2=\|\sum_{j\in F}(C_j\otimes I)(I\otimes\alpha_j)\|^2$$$$=\|\left[\begin{array}{ccc}
C_1\otimes I&C_2\otimes I&\ldots\end{array}\right]\left[\begin{array}{c}I\otimes\alpha_1\\
I\otimes\alpha_2\\
\vdots\end{array}\right]\|^2$$
$$\leq\|\left[\begin{array}{ccc}
C_1\otimes I&C_2\otimes I&\ldots\end{array}\right]\|^2\|\left[\begin{array}{c}I\otimes\alpha_1\\
I\otimes\alpha_2\\
\vdots\end{array}\right]\|^2$$ 
$$\leq\|\sum_{j\in F} C_jC_j^*\|\|\sum_{j\in F}\alpha_j^*\alpha_j\|\leq\|\sum_{j\in F}\alpha_j^*\alpha_j\|.$$
Further, by Lemma \ref{lec}  $\omega(C_k^*C_j)=\delta_{k,j}$, hence 
$$\|\sum_{j\in F}\alpha_j\otimes C_j\|^2=\|(\sum_{j\in F}\alpha_j\otimes C_j)^*(\sum_{j\in F}\alpha_j\otimes C_j)\|$$$$\geq\|({\rm id}\otimes\omega)[(\sum_{j\in F}\alpha_j\otimes C_j)^*(\sum_{j\in F}\alpha_j\otimes C_j)]\|=\|\sum_{j,k\in F}\alpha_k^*\alpha_j\otimes\omega(C_k^*C_j)\|=\|\sum_{j\in F}\alpha_j^*\alpha_j\|.$$
Thus (\ref{c1}) holds and the $C_j$ are column orthonormal.

Suppose now conversely, that the operators $C_j$ are column orthonormal. Assume first that the set is finite, say $C_0,\ldots,C_m$. We have already observed after the Definition \ref{dec} that $\|\sum_jC_jC_j^*\|=1$, thus we have only to prove the existence of an appropriate state $\omega$.   Let $B_j$ ($j=0,\ldots,m$) be rank one operators on $\ell^2$ given in the usual orthonormal basis of $\ell^2$ by matrices
$$B_j=\left[\begin{array}{ccccc}
0&0&\ldots&0&0\\
\vdots&\vdots&\vdots&\vdots&\vdots\\
1&0&\ldots&0&0\\
0&0&\ldots&0&0\\
\vdots&\vdots&\vdots&\vdots&\vdots
\end{array}\right],$$
where $1$ is in the $j$-th row.  Since the operator sets $(B_j)_{j=0}^m$ and $(C_j)_{j=0}^m$ are both column orthonormal, there exists a completely isometric isomorphism $\varphi: {\rm span}(C_j)\to{\rm span}(B_j)$ such that $\varphi(C_j)=B_j$. As a complete contraction, $\varphi$ is necessarily of the form $\varphi(T)=X\pi(T)Y$ for suitable contractions $X$ and $Y$ and a representation $\pi$ of the C$^*$-algebra $\ca$ generated by $(C_j)_{j=0}^m$ on a Hilbert space $\h$ \cite[pp. 99 and 102]{Pa}.  Let $A_j:=\pi(C_j)$, so that 
\begin{equation}\label{c4}B_j=XA_jY\ \ (j=0,\ldots,m).\end{equation}
Denote $P:=B_0$ (a projection). Since  $B_j=B_jP$, we may replace in (\ref{c4}) $Y$ by $YP$, so the operators $X:\h\to\ell^2$ and $Y:\ell^2\to\h$ are of the form
$$X=\left[\begin{array}{c}
\xi_0^*\\
\xi_1^*\\
\xi_2^*\\
\vdots
\end{array}\right],\ \ Y=\left[\begin{array}{cccc}
\eta&0&0&\ldots\end{array}\right],$$  
where $\eta:\bc\to\h$ is essentially a vector in $\h$ and the $\xi_j^*:\h\to\bc$ are contractive linear functionals on $\h$, hence given by $\xi_j^*(\zeta)=\langle\zeta,\xi_j\rangle$ ($\zeta\in\h$) for some vectors $\xi_j\in\h$ with $\|\xi_j\|\leq1$. Comparing the entries of matrices in  (\ref{c4}) we have 
now 
\begin{equation}\label{c7}\langle A_j\eta,\xi_j\rangle=1\ \ \mbox{and}\ \ \langle A_j\eta,\xi_k\rangle=0,\ \mbox{if}\ k\ne j.\end{equation}
Since $\|A_j\eta\|\leq1$ and $\|\xi_j\|\leq1$, the first equality in (\ref{c7}) implies that $\|\xi_j-A_j\eta\|^2\leq2-2\rea(\langle A_j\eta,\xi_j\rangle)=0$, hence $\xi_j=A_j\eta$. Thus it follows from (\ref{c7}) that $\langle A_j\eta,A_k\eta\rangle=\delta_{k,j}$. Hence the state $\omega$ on $\bk$, defined by $\omega(T)=\langle\pi(T)\eta,\eta\rangle$ satisfies $\omega(C_k^*C_j)=\delta_{k,j}$.  This completes the the proof in the case when the set $(C_j)$ is finite. If the set of operators $(C_j)$ is infinite, we can apply the argument just given to each of its finite subsets $F$ to obtain a state $\omega_F$ satisfying $\omega_F(C_k^*C_j)=\delta_{j,k}$ for all $j,k\in F$, and then we take a weak* limit point $\omega$ of the net of states $\omega_F$. 
\end{proof}

Recall that each state on the C$^*$-algebra of compact operators ${\rm K}(\k)$ is of the form $T\mapsto\sum_{j=1}^{\infty}\langle T\xi_j,\xi_j\rangle$, where $\xi_j\in\k$ and $\sum_j\|\xi_j\|^2=1$. (If $\dim\k<\infty$ the sum can be taken to have only finitely many terms, so each state  is a convex combination of vector states.) Further, each state on  $\bk$ can be approximated by vector states, hence Theorem \ref{thc} implies the following corollary.
\begin{co}\label{coc}A set of norm $1$ operators $C_j$ ($j=0,\ldots,m$) on $\k=\bc^r$ ($r\in\bn$) is column orthogonal if and only if $\|\sum_{j=0}^mC_jC_j^*\|=1$ and all the operators $C_j$ achieve  their norms at the same unit vector in $\k$. The same conclusion holds for compact operators on an infinite dimensional Hilbert space $\k$. For general operators a similar conclusion holds, but the norm attaining condition must be replaced by:
for each $\varepsilon>0$ there exists a unit vector $\xi\in\k$ such that $\|C_j\xi\|>1-\varepsilon$ for all $j$. 
\end{co}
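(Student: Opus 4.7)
The plan is to derive the corollary directly from Theorem \ref{thc} by translating the condition ``there exists a state $\omega$ with $\omega(C_j^*C_j)=1$ for all $j$'' into the three stated norm-attaining conditions. I would first note that the equality $\|\sum_jC_jC_j^*\|=1$ (rather than merely $\leq 1$) is automatic once the $C_j$ are column orthogonal, by the observation made right after Definition \ref{dec}; hence only the translation of the state condition needs to be carried out case by case.

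In the finite-dimensional setting $\k=\bc^r$, every state on $\mat{r}{\bc}$ is a convex combination $\omega=\sum_i\lambda_i\langle(\cdot)\xi_i,\xi_i\rangle$ of vector states, obtained by diagonalizing the density matrix. The identity
$$1=\omega(C_j^*C_j)=\sum_i\lambda_i\|C_j\xi_i\|^2,$$
combined with $\|C_j\xi_i\|\leq 1$ and $\sum_i\lambda_i=1$, forces $\|C_j\xi_i\|=1$ for every index $i$ with $\lambda_i>0$. Since the weights $\lambda_i$ are independent of $j$, any fixed $i_0$ with $\lambda_{i_0}>0$ yields a single unit vector at which all the $C_j$ simultaneously attain their norm. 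The converse is immediate from Theorem \ref{thc} applied to the vector state associated with the common norm-attaining vector.

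For compact operators on an infinite-dimensional $\k$ I would decompose the state $\omega$ furnished by Theorem \ref{thc} as $\omega=\omega_n+\omega_s$, where $\omega_n$ is normal and $\omega_s$ is singular (hence vanishes on ${\rm K}(\k)$). Writing $\omega_n(T)=\sum_i\langle T\xi_i,\xi_i\rangle$ with $\sum_i\|\xi_i\|^2\leq 1$, and using that $C_j^*C_j$ is compact, one obtains
$$1=\omega(C_j^*C_j)=\omega_n(C_j^*C_j)=\sum_i\|C_j\xi_i\|^2\leq\sum_i\|\xi_i\|^2\leq 1.$$
Equality throughout forces $\sum_i\|\xi_i\|^2=1$ and $\|C_j\xi_i\|=\|\xi_i\|$ for every pair $(i,j)$; picking any $i_0$ with $\xi_{i_0}\ne 0$ and normalizing produces the desired common norm-attaining unit vector, and the converse is again immediate.

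For the general case I would invoke the weak$^*$-density of vector states in the state space of $\bk$ mentioned right before the corollary. The state $\omega$ from Theorem \ref{thc} is then a weak$^*$-limit of vector states $\omega_\alpha(T)=\langle T\xi_\alpha,\xi_\alpha\rangle$, so $\|C_j\xi_\alpha\|^2=\omega_\alpha(C_j^*C_j)\to\omega(C_j^*C_j)=1$ for each $j$; finiteness of the index set $j=0,\ldots,m$ then lets one choose a single $\alpha$ making $\|C_j\xi_\alpha\|>1-\varepsilon$ for every $j$ at once. Conversely, such a net of approximately common norm-attaining unit vectors produces vector states whose weak$^*$-cluster point $\omega$ satisfies $\omega(C_j^*C_j)=1$, feeding back into Theorem \ref{thc}. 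The main step to watch is the compact case, where one must correctly invoke the normal/singular decomposition of $\omega$ together with the compactness of each $C_j^*C_j$ to reduce to the strict equality argument; once that decomposition is in place, the remainder is the same convexity/equality-case reasoning used in the finite-dimensional reduction.
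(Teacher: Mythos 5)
Your treatment of the finite-dimensional and compact cases is correct and is essentially the route the paper intends (its own ``proof'' is just the two-sentence remark preceding the corollary): diagonalize the density matrix, respectively split off the singular part of the state and use that a singular positive functional kills $C_j^*C_j\in{\rm K}(\k)$, then run the equality case of a convex combination. The step to worry about is the general case: the assertion that the state $\omega$ from Theorem \ref{thc} is a weak$^*$-limit of vector states is false for a general state on $\bk$ when $\dim\k=\infty$ (the paper's own phrasing commits the same overstatement). For instance, $\omega=\tfrac12(\omega_\xi+\omega_\eta)$ with $\xi\perp\eta$ unit vectors is not a weak$^*$-limit of vector states $\omega_{\zeta_\alpha}$: testing against the rank-one projections onto $\xi$ and $\eta$ forces $|\langle\zeta_\alpha,\xi\rangle|^2\to\tfrac12$ and $|\langle\zeta_\alpha,\eta\rangle|^2\to\tfrac12$, while testing against $\xi\otimes\eta^*$ forces $\langle\zeta_\alpha,\eta\rangle\overline{\langle\zeta_\alpha,\xi\rangle}\to 0$, which is incompatible. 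What is true is that the state space of $\bk$ is the weak$^*$-closed \emph{convex hull} of the vector states.

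The gap is easily closed, and in fact more simply than by any approximation of states: from $\omega(C_j^*C_j)=1$ for $j=0,\dots,m$ one gets $m+1=\omega(\sum_jC_j^*C_j)\leq\|\sum_jC_j^*C_j\|\leq m+1$, hence $\sup_{\|\xi\|=1}\sum_j\|C_j\xi\|^2=m+1$; since each summand is at most $1$, any unit vector $\xi$ with $\sum_j\|C_j\xi\|^2>m+1-(2\varepsilon-\varepsilon^2)$ satisfies $\|C_j\xi\|>1-\varepsilon$ for every $j$ simultaneously. (This single observation also re-proves your first two cases, since for $\k=\bc^r$, or for compact $C_j$, the positive operator $\sum_jC_j^*C_j$ attains its norm at an eigenvector.) Alternatively, keep your approximation idea but approximate $\omega$ on the finitely many elements $C_j^*C_j$ by a convex combination $\sum_i\lambda_i\omega_{\xi_i}$ to within $\delta$, and average over $j$ to locate one index $i$ with $\|C_j\xi_i\|^2>1-(m+1)\delta$ for all $j$. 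Your converse direction (a weak$^*$-cluster point of the vector states associated with the almost-norming vectors) is fine as written, as is your opening remark that $\|\sum_jC_jC_j^*\|=1$ comes for free from Definition \ref{dec}.
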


For norm $1$ operators $C_j$ and unit vector $\xi$ the norming condition $\|C_j\xi\|\approx1$ in Corollary \ref{coc} is equivalent to the requirement that the norm of the column $(C_0,\ldots,C_m)^T$ is $\sqrt{m}$. Since in the proof of Theorem \ref{thc} we have used only the row and the column structure, it follow in particular that the operator space structure of the column Hilbert space is determined already by the norms on spaces of rows and columns (columns with orthonormall entries are sufficient), which, however, has been proved already by Mathes \cite{Math}.

\section{A metric characterization of operators with orthogonal ranges}

In this section we will study a more restrictive form of Pythagoras orthogonality, which turns out to be also a special case of column orthogonality, in which scalars are replaced by elements of a C$^*$-algebra $\ca$. For this, we will need a metric characterization of pairs $A,B\in\ca$ satisfying $A^*B=0$. First a lemma is needed, which (as pointed to me by an anonymous referee) follows from \cite[Lemma 2.3]{FMX}, but we will present a short direct proof.

\begin{lemma}\label{lero}Let $A,B\in\ca$, where $\ca$ is a C$^*$-algebra. Then $B^*B\leq A^*A$ if and only if $\|BX\|\leq\|AX\|$ for all (positive) $X\in\ca$.
\end{lemma}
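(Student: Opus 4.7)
The implication $(\Rightarrow)$ is routine: if $B^*B\leq A^*A$, then compressing by any $X\in\ca$ gives $X^*B^*BX\leq X^*A^*AX$, and taking norms yields $\|BX\|^2=\|X^*B^*BX\|\leq\|X^*A^*AX\|=\|AX\|^2$. This works for arbitrary $X\in\ca$, so once the converse is proved the ``for all $X$'' and ``for all positive $X$'' versions of the hypothesis are automatically equivalent, which is what the parenthetical ``(positive)'' in the statement is recording.

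For the converse, assume $\|BX\|\leq\|AX\|$ for every positive $X\in\ca$. My plan is to test the hypothesis against the positive element that inverts $A^*A$ on its range. Passing if necessary to the unitization $\widetilde{\ca}$, fix $\varepsilon>0$ and set
$$X_\varepsilon:=(A^*A+\varepsilon 1)^{-1/2}.$$
Then $X_\varepsilon\geq 0$, it commutes with $A^*A$, and by the continuous functional calculus
$$X_\varepsilon A^*A X_\varepsilon = A^*A(A^*A+\varepsilon 1)^{-1},\qquad \|AX_\varepsilon\|^2=\sup_{t\in\sigma(A^*A)}\frac{t}{t+\varepsilon}\leq 1.$$
Once the hypothesis is applied at $X_\varepsilon$, it reads $X_\varepsilon B^*B X_\varepsilon\leq 1$. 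Conjugating this by the invertible element $X_\varepsilon^{-1}=(A^*A+\varepsilon 1)^{1/2}\in\widetilde{\ca}$ yields
$$B^*B\leq X_\varepsilon^{-2}=A^*A+\varepsilon 1.$$
Since the positive cone of $\widetilde{\ca}$ is norm-closed, letting $\varepsilon\to 0^+$ gives $B^*B\leq A^*A$, finishing the proof.

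The one technical point needing care is that $X_\varepsilon$ may lie in $\widetilde{\ca}\setminus\ca$ when $\ca$ is non-unital, so the hypothesis does not apply to it directly. I would bridge this using an approximate unit $(e_\lambda)$ for $\ca$ and the positive elements $Y_\lambda:=e_\lambda X_\varepsilon e_\lambda\in\ca$. Since $Ae_\lambda\to A$, $Be_\lambda\to B$ in norm, and $AX_\varepsilon,BX_\varepsilon\in\ca$ (because $\ca$ is an ideal in $\widetilde{\ca}$), a short splitting of the form
$$AY_\lambda-AX_\varepsilon=(Ae_\lambda-A)X_\varepsilon e_\lambda+AX_\varepsilon(e_\lambda-1)$$
shows $AY_\lambda\to AX_\varepsilon$ and $BY_\lambda\to BX_\varepsilon$ in norm. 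Applying the hypothesis to each $Y_\lambda$ and passing to the limit then gives $\|BX_\varepsilon\|\leq\|AX_\varepsilon\|$.

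The only real obstacle is thus the non-unital bookkeeping just sketched; the algebraic heart of the argument is the single line of functional calculus $\|X_\varepsilon A^*A X_\varepsilon\|\leq 1$, which, because $X_\varepsilon$ is invertible in $\widetilde{\ca}$, converts the norm hypothesis into the operator inequality $B^*B\leq A^*A+\varepsilon 1$.
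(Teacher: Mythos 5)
Your proof is correct, but it takes a genuinely different route from the paper's. The paper argues by contraposition entirely inside $\ca$: if $B^*B\not\leq A^*A$, it picks $t\in(0,1)$ with $X:=(tB^*B-A^*A)_+\neq 0$, notes that $X(tB^*B-A^*A)X=((tB^*B-A^*A)_+)^3\geq 0$ is nonzero, and deduces $\|AX\|^2\leq t\|BX\|^2<\|BX\|^2$, violating the hypothesis at a single positive test element of $\ca$ with no unitization needed. Your argument is direct rather than by contradiction: testing against $X_\varepsilon=(A^*A+\varepsilon 1)^{-1/2}$ converts the norm inequality into the operator inequality $B^*B\leq A^*A+\varepsilon 1$ and then lets $\varepsilon\to 0$. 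The price is exactly the bookkeeping you identify: $X_\varepsilon$ lives in $\widetilde{\ca}$, so you must approximate it by $e_\lambda X_\varepsilon e_\lambda\in\ca$, and your splitting $AY_\lambda-AX_\varepsilon=(Ae_\lambda-A)X_\varepsilon e_\lambda+AX_\varepsilon(e_\lambda-1)$ does handle this correctly (both terms vanish in norm since $A\in\ca$, $AX_\varepsilon\in\ca$ because $\ca$ is an ideal in $\widetilde{\ca}$, and $\|X_\varepsilon e_\lambda\|$ is bounded). What your approach buys is transparency: the mechanism by which a norm inequality upgrades to an operator inequality is laid bare in one line of functional calculus. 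What the paper's approach buys is brevity and the avoidance of any unitization or approximate-unit argument. Both proofs use only positive test elements, so both establish the stronger ``for all positive $X$'' form of the hypothesis, as the parenthetical in the statement intends.
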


\begin{proof}If $B^*B\leq A^*A$, then $X^*B^*BX\leq X^*A^*AX$ for all $X\in\ca$, which implies that $\|BX\|^2=\|X^*B^*BX\|\leq\|X^*A^*AX\|=\|AX\|^2$. To prove the converse, suppose that $B^*B\not\leq A^*A$.
Then there exists $t\in(0,1)$ such that the positive part $X$ of the operator $tB^*B-A^*A$ is not zero, that is \begin{equation}\label{ro5}X:=(tB^*B-A^*A)_+\ne0.\end{equation}
(Otherwise $tB^*B-A^*A\leq0$ for all $t\in(0,1)$ and letting $t\to1$ it would follow that $B^*B\leq A^*A$). Then $X\geq0$ and 
\begin{equation}\label{511}0\ne X(tB^*B-A^*A)X\geq0.\end{equation}
Hence $XA^*AX\leq tXB^*BX$ and therefore $$\|AX\|^2=\|XA^*AX\|\leq t\|XB^*BX\|=t\|BX\|^2<\|BX\|^2,$$
since $BX\ne0$. (Namely, if $BX=0$, then (\ref{511}) would be a contradiction.)
\end{proof}

\begin{theorem}\label{prro}For elements $A,B$ in any C$^*$-algebra $\ca$ the equality $A^*B=0$ holds if and only if
\begin{equation}\label{ro}\|AX\|\leq\|AX+BY\|\ \ \forall X,Y\in\ca.\end{equation}
\end{theorem}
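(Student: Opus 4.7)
The forward implication is a short algebraic observation: if $A^{*}B=0$ then also $B^{*}A=0$, so the cross terms in $(AX+BY)^{*}(AX+BY)$ vanish and
$$(AX+BY)^{*}(AX+BY)=X^{*}A^{*}AX+Y^{*}B^{*}BY\geq X^{*}A^{*}AX\geq 0.$$
Since the operator norm is monotone on positive elements, taking norms gives $\|AX+BY\|^{2}\geq\|X^{*}A^{*}AX\|=\|AX\|^{2}$, which is (\ref{ro}).

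For the converse, the plan is to feed a well-chosen $Y$ back into Lemma \ref{lero} and then vary a scalar parameter. Specifically, given (\ref{ro}), for each $\lambda\in\bc$ I substitute $Y=\lambda X$ to obtain
$$\|AX\|\leq\|(A+\lambda B)X\|\quad\text{for all } X\in\ca.$$
Applying Lemma \ref{lero} (with the roles of the ``$A$'' and ``$B$'' there played by $A+\lambda B$ and $A$ respectively) yields the operator inequality $A^{*}A\leq(A+\lambda B)^{*}(A+\lambda B)$, which after expansion is equivalent to
$$\bar\lambda A^{*}B+\lambda B^{*}A+|\lambda|^{2}B^{*}B\geq 0\quad\text{for all }\lambda\in\bc.$$

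Now I would exploit the freedom in $\lambda$. Writing $\lambda=re^{i\theta}$ with $r>0$, dividing by $r$, and letting $r\to 0^{+}$ leaves
$$e^{-i\theta}A^{*}B+e^{i\theta}B^{*}A\geq 0\quad\text{for all }\theta\in[0,2\pi).$$
Setting $T=B^{*}A$, this says that $e^{i\theta}T+e^{-i\theta}T^{*}\geq 0$ for every $\theta$. Comparing $\theta$ and $\theta+\pi$ forces $e^{i\theta}T+e^{-i\theta}T^{*}=0$ for every $\theta$; choosing $\theta=0$ gives $T+T^{*}=0$ and $\theta=\pi/2$ gives $T=T^{*}$, so $T=0$, i.e.\ $A^{*}B=0$.

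The only delicate point is the passage $r\to 0^{+}$ in the space of self-adjoint operators: since the cone of positive elements of $\ca$ is norm-closed, the limit of the positive elements $e^{-i\theta}A^{*}B+e^{i\theta}B^{*}A+rB^{*}B$ as $r\to 0^{+}$ is again positive, and this is the only analytic step needed. Everything else is bookkeeping with the two values of $\lambda$ corresponding to $\theta\in\{0,\pi/2\}$ and their antipodes.
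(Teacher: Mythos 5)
Your proof is correct and follows essentially the same route as the paper's: both directions hinge on Lemma \ref{lero} to convert the norm inequality into the operator inequality $A^*A\le(A+\lambda B)^*(A+\lambda B)$, after which the cross term is isolated by dividing by $|\lambda|$ and letting $|\lambda|\to0$ (the positive cone being norm-closed) and then killed by varying the phase. The only real difference is that you invoke the hypothesis only with $Y=\lambda X$, so your argument in fact establishes the formally stronger statement that $\|AX\|\le\|(A+\lambda B)X\|$ for all $X\in\ca$ and all $\lambda\in\bc$ already forces $A^*B=0$, whereas the paper keeps $X,Y$ general, inserts an auxiliary right factor $C$ before applying the lemma, and finishes by applying states to the element $C^*X^*A^*BYC$ rather than by comparing $\theta$ with $\theta+\pi$ at the operator level.
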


\begin{proof}We may replace $X$ and $Y$ in (\ref{ro}) by $XC$ and $YC$ for any $C\in\ca$ and then apply Lemma \ref{lero} to $AX$ and $AX+BY$ (instead of $A$ and $B$). In this way we see that (\ref{ro}) holds if and only if 
$$C^*X^*A^*AXC\leq C^*(AX+BY)^*(AX+BY)C\ \ \forall X,Y,C\in\ca,$$
which can be rewritten as
$$2\rea(C^*X^*A^*BYC)+C^*Y^*B^*BYC\geq0.$$
Replacing $Y$ by $twY$, where $w\in\bc$ with $|w|=1$ and $t\in(0,\infty)$, we obtain equivalent condition
$$2\rea(wC^*X^*A^*BYC)+tC^*Y^*B^*BYC\geq0.$$
Considering $t\to0$, we see that
$$\rea(w(C^*X^*A^*BYC))\geq0\ \ \forall w\in\bc\ \mbox{with}\ |w|=1.$$
This means that $C^*X^*A^*BYC=0$ (to see this, apply states of $\ca$ to $C^*X^*A^*BYC$) and, since $C, X$ and $Y$ are arbitrary,  $A^*B=0$. The verification of converse is easy.
\end{proof}

\begin{co}\label{coro}Let $P_j$ ($j=0,\ldots,m$) be fixed positive elements in a C$^*$-algebra $\ca$. For elements $A_j$ in $\ca$ the equality
\begin{equation}\label{ro2}\|\sum_jA_jX_j\|^2=\|\sum_jX_j^*P_jX_j\|\end{equation}
holds for all $X_j\in\ca$ if and only if $A_j^*A_j=P_j$ and $A_k^*A_j=0$ if $k\ne j$. In particular, if all $P_j$ are equal to a projection $P$, then the $A_j$ are partial isometries with orthogonal ranges and the same initial projection $P$. 
\end{co}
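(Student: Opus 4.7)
The plan is to deduce the corollary from Theorem~\ref{prro} together with Lemma~\ref{lero}, applied to suitable specializations of the tuple $(X_0,\dots,X_m)$.

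For the easy direction, assume $A_j^*A_j=P_j$ and $A_k^*A_j=0$ for $k\ne j$. A direct computation gives
\[
\Bigl(\sum_j A_jX_j\Bigr)^{\!*}\Bigl(\sum_j A_jX_j\Bigr)=\sum_{j,k}X_k^*A_k^*A_jX_j=\sum_j X_j^*A_j^*A_jX_j=\sum_j X_j^*P_jX_j,
\]
so taking C$^*$-norms of both sides yields \eqref{ro2}.

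For the converse, suppose \eqref{ro2} holds. First I would specialize by setting $X_l=0$ for all $l\ne k$ and keeping $X_k=X$ arbitrary: this gives $\|A_kX\|^2=\|X^*P_kX\|=\|P_k^{1/2}X\|^2$, i.e.\ $\|A_kX\|=\|P_k^{1/2}X\|$ for every $X\in\ca$. Applying Lemma~\ref{lero} in both directions (to the pair $A_k$, $P_k^{1/2}$) yields $A_k^*A_k=P_k$ for each $k$.

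Next I would fix two distinct indices $j\ne k$ and set all other $X_l=0$, keeping $X_j=X$ and $X_k=Y$ arbitrary. Since $X^*P_jX$ and $Y^*P_kY$ are positive elements of $\ca$, their sum dominates the first summand, hence
\[
\|A_jX+A_kY\|^2=\|X^*P_jX+Y^*P_kY\|\ge\|X^*P_jX\|=\|A_jX\|^2,
\]
so $\|A_jX\|\le\|A_jX+A_kY\|$ for all $X,Y\in\ca$. Now Theorem~\ref{prro} applied to the pair $(A_j,A_k)$ immediately gives $A_j^*A_k=0$, which is the desired orthogonality.

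For the final assertion, assume $P_j=P$ is a projection. Since $A_j^*A_j=P$ is a projection, $A_j$ is a partial isometry with initial projection $P$; and the already established relations $A_k^*A_j=0$ for $k\ne j$ say precisely that the ranges of the $A_j$ are mutually orthogonal. (In particular, this places us in the column-orthonormal setting of Theorem~\ref{thc} after right-multiplying by $P$.) The only step that requires care is the deduction $A_j^*A_j=P_j$ from the scalar equality of norms $\|A_jX\|=\|P_j^{1/2}X\|$; this is exactly where Lemma~\ref{lero} is needed in both directions, and is the one place where a purely norm-theoretic identity is converted into an operator identity.
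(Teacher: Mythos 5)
Your proof is correct and follows essentially the same route as the paper: specialize to a single index and apply Lemma~\ref{lero} in both directions to get $A_k^*A_k=P_k$, then specialize to two indices and invoke Theorem~\ref{prro} to get $A_k^*A_j=0$, with the reverse direction being the same direct expansion. The only cosmetic difference is that you derive the inequality $\|A_jX\|\le\|A_jX+A_kY\|$ by dominating the first positive summand, whereas the paper dominates the second; these are interchangeable since $(A_j^*A_k)^*=A_k^*A_j$.
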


\begin{proof} If $A_j$ satisfy (\ref{ro2}) for all $X_j\in\ca$, then (taking $X_j=0$ for $j\ne0$) we get $$\|X_0^*A_0^*A_0X_0\|=\|A_0X_0\|^2=\|X_0^*P_0X_0\|$$ for all $X_0\in\ca$, hence  by Lemma \ref{lero} (applied to $A=A_0$ and $B=\sqrt{P_0}$) $A_0^*A_0=P_0$. Similarly $A_j^*A_j=P_j$ for all $j$. Further, from (\ref{ro2}) we now have
$$\|A_jX_j+A_kX_k\|^2=\|X_j^*P_jX_j+X_k^*P_kX_k\|$$$$\geq\|X_k^*P_kX_k\|=\|X_k^*A_k^*A_kX_k\|=\|A_kX_k\|^2$$
for all $X_k,Y_k\in\ca$ and all $k\ne j$, hence by Theorem \ref{prro} $A_k^*A_j=0$. This proves the corollary in one direction, while the proof in the reverse direction is straightforward:
if $A_j^*A_j=P_j$ and $A_k^*A_j=0$ for $k\ne j$, then $$\|\sum_jA_jX_j\|^2=\|(\sum_j A_jX_j)^*(\sum_kA_kX_k)\|=\|\sum_jX_j^*P_jX_j\|.$$
\end{proof}

\begin{pr}\label{prro1}Suppose that $A,B\in\ca$ satisfy $A^*B=0$. Then $A\po B$ if and only if there exists a state $\omega$ on $\ca$ such that $\omega(A^*A)=\|A\|^2$ and $\omega(B^*B)=\|B\|^2$. In this case $A$ and $B$ are column orthogonal.
\end{pr}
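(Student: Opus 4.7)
The key observation is that $A^*B=0$ (and hence $B^*A=0$) wipes out all cross terms: for scalars $\alpha,\beta\in\bc$ we have
$$(\alpha A+\beta B)^*(\alpha A+\beta B)=|\alpha|^2 A^*A+|\beta|^2 B^*B,$$
and the identical formula (with $\alpha^*\alpha$, $\beta^*\beta$) survives when $\alpha,\beta\in\matn{\bc}$. My plan is to pin the norm of this positive element between a trivial upper bound coming from $A^*A\leq\|A\|^2I$, $B^*B\leq\|B\|^2I$ and a lower bound produced by the state $\omega$. The same sandwich, read for scalars and then for matrix coefficients, will simultaneously yield both implications of the proposition and column orthogonality.

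After normalizing to $\|A\|=1=\|B\|$ (the case where one of them is zero is trivial), I would first handle ``$\omega\Rightarrow A\po B$''. Given $\omega$ with $\omega(A^*A)=1=\omega(B^*B)$, the identity above gives
$$\|A+\lambda B\|^2=\|A^*A+|\lambda|^2B^*B\|\geq\omega(A^*A+|\lambda|^2B^*B)=1+|\lambda|^2,$$
while subadditivity of the C$^*$-norm provides the matching upper bound $\|A^*A+|\lambda|^2B^*B\|\leq 1+|\lambda|^2$. Hence $A\po B$.

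For the converse direction I would specialize $\|A+\lambda B\|^2=1+|\lambda|^2$ to $\lambda=1$, getting $\|A^*A+B^*B\|=2$, and then choose any state $\omega$ on $\ca$ attaining this norm. Since $\omega(A^*A)\leq\|A^*A\|=1$ and $\omega(B^*B)\leq 1$, the equality $\omega(A^*A)+\omega(B^*B)=2$ forces $\omega(A^*A)=1=\omega(B^*B)$. This little linearity-of-$\omega$ observation --- a state achieving its maximum on a sum of two positive contractions must achieve it on each summand separately --- is the only mildly delicate step in the whole argument.

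For column orthogonality I would rerun the sandwich with matrix coefficients $\alpha,\beta\in\matn{\bc}$. Using $A^*A\leq I$, $B^*B\leq I$ together with $\alpha^*\alpha,\beta^*\beta\geq 0$ gives the operator inequality
$$\alpha^*\alpha\otimes A^*A+\beta^*\beta\otimes B^*B\leq(\alpha^*\alpha+\beta^*\beta)\otimes I,$$
hence $\|\alpha\otimes A+\beta\otimes B\|^2\leq\|\alpha^*\alpha+\beta^*\beta\|$. The matching lower bound is produced by applying the completely positive contraction ${\rm id}\otimes\omega$ to the same positive element; it outputs exactly $\alpha^*\alpha+\beta^*\beta$ because $\omega(A^*A)=1=\omega(B^*B)$. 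Thus (\ref{c1}) holds for all $n$ and all $\alpha,\beta$, so $\{A,B\}$ is column orthogonal in the sense of Definition \ref{dec} (applied through any faithful representation of $\ca$). I foresee no real obstacle: once $A^*B=0$ and the maximizing state are in hand, the ${\rm id}\otimes\omega$ trick automatically lifts the scalar Pythagoras identity to its matrix (column) version.
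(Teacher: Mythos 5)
Your argument is correct. The direction ``$A\po B\Rightarrow$ existence of $\omega$'' is essentially the paper's own proof: both exploit $A^*B=0$ to reduce $\|A+\lambda B\|^2$ to $\|A^*A+|\lambda|^2B^*B\|$ and then pick a norm-attaining state for the positive element $A^*A+B^*B$, forcing $\omega(A^*A)=\omega(B^*B)=1$ by the same ``each summand must saturate'' observation (the paper runs this for general weights $s,t>0$, but that adds nothing). Where you genuinely diverge is the converse and the column-orthogonality claim. The paper first checks $A_0A_0^*+B_0B_0^*\leq I$ via the squaring trick $(A_0A_0^*+B_0B_0^*)^2\leq A_0A_0^*+B_0B_0^*$ and then invokes Theorem \ref{thc}, whose proof in turn relies on the row--column factorization and Lemma \ref{lec}. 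You instead verify (\ref{c1}) directly: the upper bound $\|\alpha\otimes A+\beta\otimes B\|^2\leq\|\alpha^*\alpha+\beta^*\beta\|$ comes from positivity of $\alpha^*\alpha\otimes(I-A^*A)+\beta^*\beta\otimes(I-B^*B)$ (available here because $A^*B=0$ kills the cross terms exactly, so no Cauchy--Schwarz-type factorization is needed), and the lower bound from applying the unital completely positive map ${\rm id}\otimes\omega$. This is self-contained, avoids Theorem \ref{thc} and the condition $AA^*+BB^*\leq I$ altogether, and delivers $A\po B$ as the $n=1$ case of the same computation; the trade-off is that the paper's route makes explicit the link to its general characterization of column orthonormal sets. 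One cosmetic point: for a non-unital $\ca$ the inequality with $I$ should be read in the unitization or in a faithful representation, but this affects nothing.
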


\begin{proof}Since $A^*B=0=B^*A$, 
\begin{equation}\label{ro4}\|\alpha\otimes A+\beta\otimes B\|^2=\|\alpha^*\alpha\otimes A^*A+\beta^*\beta\otimes B^*B\|\ \ \forall\alpha,\beta\in\matn{\bc}.\end{equation}
If $A\po B$, then $\|\alpha A+\beta B\|^2=|\alpha|^2\|A\|^2+|\beta|^2\|B\|^2$ for all $\alpha,\beta\in\bc=\mat{1}{\bc}$, hence it follows from (\ref{ro4})  that
$\||\alpha|^2A^*A+|\beta|^2B^*B\|=|\alpha|^2\|A\|^2+|\beta|^2\|B\|^2$ for all $\alpha,\beta\in\bc$. Denoting $s=|\alpha|^2, t=|\beta|^2$, $P=A^*A$ and $Q=B^*B$, this is equivalent to 
\begin{equation}\label{ro50}\|s P+tQ\|=s\|P\|+t\|Q\|\ \ \forall s,t\in\br_+.\end{equation}
Let $s>0, t>0$ be fixed. Since for positive operators norm is equal to the numerical radius, we can choose a state $\omega$ on $\ca$ such that $\omega(sP+tQ)=\|s P+tQ\|$, and then we have
$$\|sP+tQ\|=s\omega(P)+t\omega(Q)\leq s\|P\|+t\|Q\|.$$ 
Here equality holds by (\ref{ro50}), hence it follows that $\omega(P)=\|P\|$ and $\omega(Q)=\|Q\|$, that is, $\omega(A^*A)=\|A\|^2$ and $\omega(B^*B)=\|B\|^2$. 

Conversely, if there exists a state $\omega$ satisfying $\omega(A^*A)=\|A\|^2$ and $\omega(B^*B)=\|B\|^2$ and $A\ne0\ne B$, then by Theorem \ref{thc} $A_0:=\|A\|^{-1}A$ and $B_0:=\|B\|^{-1}B$ are column orthonormal, since $(A_0A_0^*+B_0B_0^*)^2=A_0(A_0^*A_0)A_0^*+B_0(B_0^*B_0)B_0^*
\leq A_0A_0^*+B_0B_0^*$
implies that $A_0A_0^*+B_0B_0^*\leq I$. 
\end{proof}

\section{Operators orthogonal to a projection of rank one}

In this section we will determine (up to equivalence) all operators $A$ with $\|A\|=1$ that are Pythagoras orthogonal to a projection $B$ of rank one. We will see that Pythagoras orthogonality can depend on the action of $A$ on the entire Hilbert space. 
If $A\po B$, then by Lemma  \ref{le3}, relative to the decomposition $\h=B\h\oplus(1-B)\h$, $A\in\bh$ is represented by a matrix of the form 
$$A=\left[\begin{array}{cc}
0&a^*\\
b&C\end{array}\right],\ \mbox{hence}\ A+\lambda B=\left[\begin{array}{cc}
\lambda&a^*\\
b&C\end{array}\right],$$
where $a,b\in(I-B)\h={\rm B}(\bc,(I-B)\h)$. (Here $a^*:(I-B)\h\to\bc$ acts as $a^*(\zeta)=\langle\zeta,a\rangle$).
Thus 
\begin{equation}\label{11}(|\lambda|^2+1)I-(A+\lambda B)^*(A+\lambda B)=\left[\begin{array}{cc}
1-\|b\|^2&-\overline{\lambda}a^*-b^*C\\
-\lambda a-C^*b&(|\lambda|^2+1)I-aa^*-C^*C\end{array}\right].\end{equation}
By Lemma \ref{le1} $A\po B$ if and only if this matrix is positive and singular. If $\|b\|=1$, then the off-diagonal terms of the matrix (\ref{11}) must be $0$ for all $\lambda$ by positivity of the matrix, hence $a=0$ and $C^*b=0$. If $\|b\|<1$, we may multiply the matrix (\ref{11}) from the left by the matrix
$$S:=\left[\begin{array}{cc}
1&0\\
\gamma(\lambda a+C^*b)&I\end{array}\right],\ \ \mbox{where}\ \gamma=\frac{1}{1-\|b\|^2},$$
and form the right by $S^*$ to obtain
$$\left[\begin{array}{cc}
\gamma^{-1}&0\\
0&F(\lambda)\end{array}\right],\ \ \mbox{where}\ F(\lambda):=(|\lambda|^2+1)I-aa^*-C^*C-\gamma(\lambda a+C^*b)(\overline{\lambda}a^*+b^*C).$$
This does not change the non-invertibility and positivity, hence $A\po B$ if and only if $F(\lambda)$ is positive and not invertible.  Since $\|A\|=1$, the last column of $A$ is a contraction, that is, 
\begin{equation}\label{22}aa^*+C^*C\leq I,\end{equation} so that  the operator 
$$E(\lambda):=(|\lambda|^2+1)I-aa^*-C^*C$$
is invertible and positive if $\lambda\ne0$. We may write
\begin{equation}\label{000}F(\lambda)=E(\lambda)^{1/2}[I-D(\lambda)]E(\lambda)^{1/2},\end{equation}
where
$$D(\lambda)=\gamma E(\lambda)^{-1/2}(\lambda a+C^*b)(\overline{\lambda}a^*+b^*C)E(\lambda)^{-1/2}.$$
Observe that $D(\lambda)$ is a rank $1$ operator of the form $cc^*$ (where $c\in\h$) and each such operator has only two eigenvalues, namely $0$ and $c^*c=\|c\|^2$ (since $(cc^*)c=\|c\|^2c$). From (\ref{000}) we see that $F(\lambda)$ is not invertible and positive if and only if   $1$ is an eigenvalue of $D(\lambda)$ (this implies that $D(\lambda)\leq1$ since the only other eigenvalue of $D(\lambda)$ is $0$). Hence it follows that $A\po B$ if and only if 
$$\gamma\|E(\lambda)^{-1/2}(\lambda a+C^*b)\|^2=1.$$
This can be written as
\begin{equation}\label{12}(\lambda a+C^*b)^*E(\lambda)^{-1}(\lambda a+C^*b)=\frac{1}{\gamma}=1-\|b\|^2.\end{equation}
Now observe (by considering $F(\lambda)/|\lambda|^2$ as $|\lambda|\to\infty$) that the non-invertibility of $F(\lambda)$  implies that $I-\gamma aa^*$ is not invertible, hence $\gamma\|a\|^2=1$, that is $$\|a\|^2+\|b\|^2=1.$$ We can now write (\ref{12}) as
$$(\overline{\lambda}a^*+b^*C)E(\lambda)^{-1}(\lambda a+C^*b)=\|a\|^2\ \ (|\lambda|\ne0).$$
or, replacing $\lambda$ by $\frac{1}{\lambda}$,
\begin{equation}\label{13} (a^*+\overline{\lambda}b^*C)(I+|\lambda|^2T)^{-1}(a+\lambda C^*b)=\|a\|^2,\ \mbox{where}\ T=I-aa^*-C^*C.\end{equation}
For $\lambda\in\bc$ satisfying $|\lambda|\|T\|<1$ we can expand $(I+|\lambda|^2T)^{-1}$ and rewrite (\ref{13}) as
\begin{equation}\label{14}(a^*+\overline{\lambda}b^*C)\left(I-|\lambda|^2T+|\lambda|^4T^2-|\lambda|^6T^3+\ldots\right)(a+\lambda C^*b)=\|a\|^2.\end{equation}
Looking at coefficients of various powers of $\lambda$ and $\overline{\lambda}$ we see that
\begin{equation}\label{15}a^*T^nC^*b=0\ \ (n=0,1,2,\ldots) \ \ \mbox{and}\end{equation}
\begin{equation}\label{16}b^*CT^nC^*b=a^*T^{n+1}a\ \ (n=0,1,2,\ldots).\end{equation}
If $\|b\|=1$, then we may apply the above arguments to $A^*$ instead of $A$, which shows that in this case the identities (\ref{14}) and (\ref{15}) holds with the roles of $a$ and $b$ interchanged. (Recall also that $a=0$ if $\|b\|=1.$)

Now assume that $\|b\|\ne1$ and, to simplify further arguments, observe that we may initially replace $A$ and $B$ by equivalent operators of the form $SAT$ and $SBT=B$, where $S=1\oplus U$ and $T=1\oplus V$ are unitary, hence we may assume that $C$ is positive (and diagonal if $\dim\h<\infty$). Then the identity (\ref{15}) can also be written as $\langle T^na,Cb\rangle=0$, which means that the cyclic subspaces $[C^*(T)a]$ and $[C^*(T)Cb]$ are orthogonal. (Here $C^*(T)$ is the C$^*$-algebra generated by $T$, which is just the closure of  polynomials in $T$ since $T^*=T$.) Further, (\ref{16})  can be written as
$$\langle T^kCb,T^lCb\rangle=\langle T^kT^{1/2}a,T^lT^{1/2}a\rangle\ \ (k,l\in\bn),$$
which implies that there is a unique surjective isometry $V:[C^*(T)T^{1/2}a]\to[C^*(T)Cb]$
satisfying $VT^kT^{1/2}a=T^kCb$. In other words, $VT^{1/2}a=Cb$ and $VT=TV$. Decomposing $\h$ as $\h=[C^*(T)a]\oplus[C^*(T)Cb]\oplus\k$ (where by definition $\k$ is the orthogonal complement of the first two summands), $T$ is represented by a block diagonal matrix of the form $T_1\oplus T_2\oplus T_3$. If we define the unitary operator $U$ on $\h$ by 
$$U=\left[\begin{array}{ccc}
0&V^*&0\\
V&0&0\\
0&0&I\end{array}\right],$$
then $U=U^*$ commutes with $T$ and $UT^{1/2}a=Cb$. This is  true even if $\|b\|=1$, since we have already established (from the positivity of the matrix (\ref{11})) that in this case  $a=0$ and $Cb=0$. This proves in one direction the following proposition.

\begin{pr}\label{r1}Up to isometries  of $\bh$ all operators $A\in\bh$ with $\|A\|=1$ that satisfy $A\po B$, where $B\in\bh$ is a projection of rank one, are, relative to the decomposition $\h=B\h\oplus\ker B$, of the form
\begin{equation}\label{501}
A=\left[\begin{array}{cc}
0&a^*\\
b&C\end{array}\right],\end{equation}
where $\|a\|^2+\|b\|^2=1$, $C\geq0$ and $Cb=UT^{1/2}a$ for a self-adjoint unitary $U$ satisfying $UT=TU$ and $U[C^*(T)T^{1/2}a]\perp[C^*(T)a]$, where $T=I-aa^*-C^2$. (Thus in particular (\ref{15}) and (\ref{16}) together are equivalent to (\ref{13}).)
\end{pr}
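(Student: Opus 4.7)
The forward direction is complete from the preceding discussion, so my task is the converse: assume $A$ has the block form (\ref{501}) with the listed data $a,b,C,U,T$ (so in particular $T\ge 0$, which is implicit since $T^{1/2}$ occurs). My plan is to verify identity (\ref{13}) for every $\lambda\in\bc$ and then invoke the reduction already carried out in the text: (\ref{13}) is equivalent to (\ref{12}) via the substitution $\lambda\leftrightarrow 1/\lambda$, and (\ref{12}) for every $\lambda\ne 0$ forces the matrix in (\ref{11}) to be positive and singular there, whence $\|A+\lambda B\|^2=1+|\lambda|^2$ for $\lambda\ne 0$; continuity at $\lambda=0$ then yields $\|A\|=1$, and Lemma~\ref{le1} delivers $A\po B$. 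The degenerate case $\|b\|=1$, which forces $a=0$ and $Cb=0$, can be read off directly from (\ref{11}): the matrix becomes block-diagonal with a zero top-left entry, hence positive and singular.

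The verification of (\ref{13}) is a direct substitution. Since $UT=TU$ and $U=U^*$, $U$ commutes with every element of $C^*(T)$---in particular with $T^{1/2}$ and with $f(T):=(I+|\lambda|^2T)^{-1}$---and $Uf(T)U=f(T)$. Substituting $Cb=UT^{1/2}a$ into the left-hand side of (\ref{13}) produces four terms; using these commutations to move every $U$ past every function of $T$ collapses them to
\begin{align*}
(a^*+\overline{\lambda}b^*C)f(T)(a+\lambda Cb)
&=a^*f(T)a+(\lambda+\overline{\lambda})\,a^*Uf(T)T^{1/2}a\\
&\quad+|\lambda|^2a^*Tf(T)a.
\end{align*}
Now $a^*Uf(T)T^{1/2}a=\langle Uf(T)T^{1/2}a,a\rangle$; since $f(T)T^{1/2}a\in[C^*(T)T^{1/2}a]$ while $a\in[C^*(T)a]$, the orthogonality hypothesis $U[C^*(T)T^{1/2}a]\perp[C^*(T)a]$ forces this scalar to vanish. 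The two remaining terms combine via
$$a^*\bigl[f(T)+|\lambda|^2Tf(T)\bigr]a=a^*(I+|\lambda|^2T)f(T)a=\|a\|^2,$$
which is precisely the right-hand side of (\ref{13}).

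The only real obstacle is bookkeeping: one must see that the combination of ``$U$ commutes with $C^*(T)$'' and ``the stated orthogonality is between $U[C^*(T)T^{1/2}a]$ and $[C^*(T)a]$'' (rather than between the two subspaces themselves) is exactly what is needed to collapse the four-term expansion to the two surviving terms and annihilate the cross contribution. A more transparent variant would be to verify identities (\ref{15}) and (\ref{16}) directly from the data---each follows in a line from $Cb=UT^{1/2}a$, $U^2=I$, $UT=TU$, together with the orthogonality---and then invoke the equivalence (\ref{15})\&(\ref{16})$\iff$(\ref{13}) already noted in the parenthetical at the end of the proposition.
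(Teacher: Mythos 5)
Your proposal is correct and follows essentially the same route as the paper: the converse is reduced to verifying identity (\ref{13}), which both you and the author establish from $Cb=UT^{1/2}a$ using that $U=U^*$ commutes with $C^*(T)$ (so $U^2=I$ collapses the $|\lambda|^2$ term to $a^*Tf(T)a$) and that the orthogonality hypothesis kills the cross terms $a^*Uf(T)T^{1/2}a$. The only difference is cosmetic --- you expand into four terms where the paper factors the expression as $\langle(I+|\lambda|^2T)^{-1}(I+\lambda UT^{1/2})a,(I+\lambda UT^{1/2})a\rangle$ --- and your extra care about the case $\lambda=0$ and the degenerate case $\|b\|=1$ is sound.
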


\begin{proof}By the above arguments we only need to verify that the identity (\ref{13}) holds if $Cb=UT^{1/2}a$ and $C^*=C$, where $U$ and $T$ are as in the proposition. Using the definition of $T$, the identity (\ref{13}), which we need to verify, can be rewritten as
\begin{equation}\label{51}\langle(I+|\lambda|^2T)^{-1}(I+\lambda UT^{1/2})a,(I+\lambda UT^{1/2})a\rangle=\|a\|^2.\end{equation}
Since $U[C^*(T)T^{1/2}a]\perp[C^*(T)a]$, $U=U^*$ and $UT=TU$, we have in particular $$\langle (I+|\lambda|^2T)^{-1}a,UT^{1/2}a\rangle=0=\langle(I+|\lambda|^2T)^{-1}a,T^{1/2}Ua\rangle\ \ \ (\mbox{when}\ \lambda\ne0),$$ hence,  the left side of (\ref{51}) is equal to
$$\langle(I+|\lambda|^2T)^{-1}a,(I+\overline{\lambda}T^{1/2}U)(I+\lambda UT^{1/2})a\rangle=\langle(I+|\lambda|^2T)^{-1}a,(I+|\lambda|^2T)a\rangle=\|a\|^2.$$
\end{proof}

Now we would like to reformulate (\ref{15}) and (\ref{16}) so that $a$ and $b$ would appear  symmetrically.
Using the definition of $T$, it can easily be proved by and induction that, assuming $C^*=C$, (\ref{15}) is equivalent to
\begin{equation}\label{25}\langle(C^{2n}a,Cb\rangle=0\ \ \ (n=0,1,2,\ldots).\end{equation}

{\bf Suppose now that $\dim\h<\infty$.} Then it follows from Proposition \ref{pr2} that $\det C=0$ (hence $0$ must be an eigenvalue of $C$) since $\det C$ is the coefficient of $\lambda$ in the development of $\det(A+\lambda B)$. Observe that we need to verify (\ref{15}) and (\ref{16}) only for $n$ smaller than the degree $m$ of the minimal polynomial of $T$, since $T^m, T^{m+1},\ldots$ can all be expressed as linear combinations of $T^k$ for $k<m$. Let $\gamma_1,\ldots\gamma_m$ be the nonzero eigenvalues of $C$.  Let $a_j$ and $b_j$ be the components of $a$ and $b$ in the eigenspace $\ker(C-\gamma_jI)$. Then (\ref{25}) can be written as
$$\sum_{j=1}^m\gamma_j^{2n+1}\langle a_j,b_j\rangle=0\ \ (n=0,1,2,\ldots).$$  Since $\det[\gamma_j^{2n+1}]\ne0$ ($j=1,\ldots,m,\ n=0,\ldots, m-1$),  it follows that (\ref{25}) is equivalent to
\begin{equation}\label{40}\langle a_j,b_j\rangle=0\ \ (j=1,\ldots,m).\end{equation}

Note that $Ta=a-\langle a,a\rangle a-C^2a=\|b^2\|a-C^2a$, hence in the case $n=0$ the identity (\ref{16}) says that $\|Cb\|^2=\langle Ta,a\rangle=\|b\|^2\|a\|^2-\|Ca\|^2$, so that 
\begin{equation}\label{18}\|Ca\|^2+\|Cb\|^2=\|a\|^2\|b\|^2.\end{equation}
Let us now consider the case $n=1$ of (\ref{16}). Using (\ref{25}) and that $C^*=C$ we compute $TCb=(I-aa^*-C^2)Cb=Cb-C^3b$. Hence $\langle TCb,Cb\rangle=\|Cb\|^2-\|C^2b\|^2$ and therefore by (\ref{16}) in the case $n=1$ and using (\ref{25}) again we have
$$\|Cb\|^2-\|C^2b\|^2=\langle TCb,Cb\rangle=\langle T^2a,a\rangle
$$$$=\|Ta\|^2=\|\|b\|^2a-C^2a\|^2=\|b\|^4\|a\|^2-2\|b\|^2\|Ca\|^2+\|C^2a\|^2.$$
This can be rewritten  as
$$\|C^2a\|^2+\|C^2b\|^2=\|Cb\|^2-\|b\|^2(\|a\|^2\|b\|^2-2\|Ca\|^2).$$
By using (\ref{18}) and the identity $\|a\|^2+\|b\|^2=1$ the right side  simplifies to $\|Cb\|^2-\|b\|^2(\|Cb\|^2-\|Ca\|^2)=\|a\|^2\|Cb\|^2+\|b^2\|Ca\|^2$, hence
\begin{equation}\label{20}\|C^2a\|^2+\|C^2b\|^2=\|a\|^2\|Cb\|^2+\|b\|^2\|Ca\|^2.\end{equation}
If $n\geq3$, the computation, required to rewrite  (\ref{16})  in a way in which $a$ and $b$ appear symmetrically, seems to be so long  that the author is not able to accomplish it. In the example below we will need only the cases $n=0,1$. 

\begin{ex}Let us determine (up to equivalence) all $A\in\mat{3}{\bc}$ that satisfy $A\po B$, where $B\in\mat{3}{\bc}$ is a projection of rank $1$. By what we have established above we may suppose that $A$ and $B$ are of the form
$$\left[\begin{array}{ccc}
0&\overline{a}_1&\overline{a}_2\\
b_1&\alpha&0\\
b_2&0&0\end{array}\right],\ \ B=\left[\begin{array}{ccc}
1&0&0\\
0&0&0\\
0&0&0\end{array}\right],$$
where $0\leq\alpha\leq1$,
\begin{equation}\label{41}|a_1|^2+|a_2|^2+|b_1|^2+|b_2|^2=\|a\|^2+\|b\|^2=1\end{equation}
and from (\ref{40}), (\ref{18}) and (\ref{20}) 
\begin{equation}\label{42}\alpha a_1\overline{b_1}=0,\end{equation}
\begin{equation}\label{43}\alpha^2 (|a_1|^2+|b_1|^2)=(|a_1|^2+|a_2|^2)(|b_1|^2+|b_2|^2),\end{equation}
\begin{equation}\label{44}\alpha^4(|a_1|^2+|b_1|^2)=\alpha^2[(|a_1|^2+|a_2|^2)|b_1|^2+(|b_1|^2+|b_2|^2)|a_1|^2].\end{equation}
It is not hard to solve this system of equations to obtain for $A$ the matrices of the following forms or their transposes:
$$A=\left[\begin{array}{ccc}
0&0&0\\
b_1&0&0\\
b_2&0&0\end{array}\right],$$
where $|b_1|^2+|b_2|^2=1$, and
$$ A=\left[\begin{array}{ccc}
0&\overline{a}_1&0\\
0&\alpha&0\\
b_2&0&0\end{array}\right],$$
where   $|a_1|^2+|b_2|^2=1$, $0<\alpha\leq1$ and $|b_2|=\alpha$ or $|b_2|=1$.
\end{ex}

\end{document}